\numberwithin{equation}{section}
\def\cC{{\mathcal C}}
\def\cI{{\mathcal I}}
\def\cJ{{\mathcal J}}
\def\cK{{\mathcal K}}
\def\cS{{\mathcal S}}
\def\cU{{\mathcal U}}
\def\cT{{\mathcal T}}
\def\cX{{\mathcal X}}
\def\E{\mathbb{E}}
\def\P{\mathbb{P}}
\def\R{\mathbb{R}}
\def\sA{\mathscr{A}}
\def\sL{\mathscr{L}}
\def\sX{\mathscr{X}}
\renewcommand{\d}{\mathrm{d}}
\newtheorem*{Def*}{Definition}
\newtheorem*{Thm*}{Theorem}
\newtheorem*{Cor*}{Corollary}
\newtheorem*{Rmk*}{Remark}
\newtheorem*{Lem*}{Lemma}
\newtheorem*{Prop*}{Proposition}
\newtheorem*{Asm*}{Assumption}
\newtheorem{Def}{Definition}[section]
\newtheorem{Thm}[Def]{Theorem}
\newtheorem{Cor}[Def]{Corollary}
\newtheorem{Rmk}[Def]{Remark}
\newtheorem{Lem}[Def]{Lemma}
\newtheorem{Prop}[Def]{Proposition}
\newtheorem{Asm}[Def]{Assumption}
\def\we{\rho}  
\def\cld{c_{\delta,\we}}
\def\gl{\gamma_\we}
\def\hcr{\hat{c}_\we}
\def\csr{c_{*,\we}}
\title{Iterative Schemes for Markov Perfect Equilibria\footnote{Research of F. H\"ofer, M. Soner and Q. Yan is partially supported 
by the National Science Foundation grant DMS 2406762.}}
\author
{Felix H\"ofer\footnote{Department of Operations Research and Financial
Engineering, Princeton University, Princeton, NJ, 08540, USA, email: 
{\tt fhoefer@princeton.edu}}
\and Mathieu Lauri\`{e}re\footnote{Shanghai Frontiers Science Center of Artificial Intelligence and Deep Learning, NYU-ECNU Institute of Mathematical Sciences, NYU Shanghai, Shanghai, 200126, People’s Republic of China, email: 
{\tt mathieu.lauriere@nyu.edu}}
\and H. Mete Soner\footnote{ 
Department of Operations Research and Financial
Engineering, Princeton University, Princeton, NJ, 08540, USA, email: 
{\tt soner@princeton.edu}}
\and Qinxin Yan\footnote{Program in Applied and Computational
Mathematics, Princeton University, Princeton, NJ, 08540, USA, email: 
{\tt qy3953@princeton.edu}. }}
\date{\today}
\begin{document}
\maketitle

\begin{abstract} 
\noindent 
We study Markov perfect equilibria in continuous-time dynamic games with finitely many symmetric players. The corresponding Nash system reduces to the Nash-Lasry-Lions equation for the common value function, also known as the master equation in the mean-field setting. In the finite-state space problems we consider, this equation becomes a nonlinear ordinary differential equation admitting a unique classical solution. Leveraging this uniqueness, we prove the convergence of both Picard and weighted Picard iterations, yielding efficient computational methods. Numerical experiments confirm the effectiveness of algorithms based on this approach.
\end{abstract}
\vspace{3pt}

\noindent\textbf{Key words:} Mean-Field Games, Markov Perfect Equilibrium, Picard Iteration.
\vspace{3pt}

\noindent\textbf{Mathematics Subject Classification:} 
 35Q89, 35D40,  49L25, 60G99

\section{Introduction}
\label{sec:intro}

We study finite-player games under the modeling assumption of symmetry between players, which allows the reduction of the classical Nash system for the value functions to what we refer to as the \emph{Nash-Lasry-Lions (NLL) equation}, inspired by the literature on mean-field games (MFGs). MFGs, independently initiated by Caines, Malhamé and Huang~\cite{HMC1,HMC2,HMC3,HMC4}, and Lasry and Lions~\cite{LL1,LL2,LL3}, approximate such symmetric finite-player games in the large-population limit. Their introduction has spurred extensive research by making otherwise intractable models more analytically and numerically tractable. For historical context and a probabilistic formulation, we refer to the books by Carmona and Delarue~\cite{Car,carmona2018probabilistic}.

What we call the NLL equation corresponds, in the mean-field limit, to the \emph{master equation}, a central object extensively studied in the MFG literature; see, e.g.,~\cite{BA,bensoussan2015master,CDLL,chassagneux2022probabilistic,gangbo2022global}. A structural condition introduced in~\cite{LL3}, known as \emph{Lasry-Lions monotonicity}, is typically used in the mean-field setting to ensure well-posedness of this equation and uniqueness of equilibria. For finite-player games on a finite state space $\cX = \{1, \ldots, d\}$, the NLL equation can be viewed as a system of nonlinear ordinary differential equations derived in~\cite{GMS1,HSY}; see \eqref{eq:N-NLL} below. As shown in the classical work of Gomes, Mohr and Souza~\cite{GMS1}, this system admits a unique solution by standard ODE theory, leading to uniqueness of \emph{Markov perfect equilibria} (see Definition~\ref{def:MPE}) without requiring Lasry-Lions monotonicity.

 In these games, players move randomly on the state space $\cX$ by observing the states of all other players and taking into account their controls and states when making decisions. Their dynamics are described by controlled Markov chains, and they use feedback policies to control their own transition rates. The assumed symmetry implies that it is sufficient to track a single player, referred to as the \emph{tagged} player, along with the empirical distribution of the others.
The tagged player optimizes over their feedback control $\alpha$, which depends on time, their position, and the empirical distribution of the other players. 
To define the equilibrium concept, we assume that all untagged players use a common feedback control $\beta$, which takes the same inputs as $\alpha$. This setup defines the best response $\alpha^*(\beta)$ of the tagged player to a given control $\beta$ of the untagged players. A \emph{Markov perfect equilibrium} (MPE) is a fixed point of this mapping.
We refer the reader to the classical books by Ba\c{s}ar and Olsder~\cite{BO}, and Fudenberg and Tirole~\cite{fudenberg1991game}, for background on dynamic games. For differential games more specifically, see, for example,~\cite{bacsar1986tutorial,Car,Is:99,yong2014differential} and the references therein.

Iterative methods play a central role in the analysis and computation of equilibria, as they offer a concrete procedure for equilibrium formation. The classical fictitious play algorithm, introduced by Brown~\cite{brown} and later generalized in~\cite{LC}, updates each player's strategy by computing a best response to the empirical average of past actions. This idea has been extended in various directions, including to stochastic games~\cite{SPO} and MFGs in both continuous time~\cite{CH} and discrete time~\cite{elie}, where policies are updated at each iteration. Connections between fictitious play and Q-learning~\cite{SPO}, as well as to reinforcement learning formulations of stochastic control~\cite{HWZ,WZ,WZZ}, further highlight the relevance of these methods. In MFGs, convergence of fictitious play has been established under monotonicity or potential structure assumptions~\cite{CB,CH,HS}, and extensive numerical studies, including with common noise~\cite{DV,perrin2020fictitious}, have examined their stability and effectiveness. While fictitious play assigns equal weight to all previous iterates, Picard-type iterations, where players best respond to the most recent strategy, apply exponentially decaying weights to the past. This distinction often leads to improved numerical stability and faster convergence; however Picard-type iterations are not expected to converge in general, particularly when the best response map is not a strict contraction. 
In the context of MFGs, \cite{CCD} also employs Picard iterations to solve the backward stochastic differential equations characterizing the mean-field equilibrium. They prove the convergence of the stochastic processes either under the Lasry-Lions monotonicity conditions or in a short time horizon.
Under these assumptions, also employed in the seminal monograph~\cite{CDLL}, even the open-loop equilibria are unique and the solutions of the NLL equations
are smooth. To study similar backward stochastic differential equations, \cite{HH,Hu} utilize deep neural networks and simulations. For broader overviews of computational methods in MFGs, we refer e.g. to the surveys~\cite{AM,lauriere2021numerical}; for machine learning approaches to games, see e.g.~\cite{hu2024recent}.

In this work, we focus on Picard-type iterations for computing MPEs in symmetric finite-player games. While in general such iterations are not guaranteed to converge, we show, both theoretically and numerically, that they do converge for the class of games considered here, without requiring monotonicity assumptions. Specifically, in the basic Picard iteration, one starts from an arbitrary control $\beta$ and updates it via the best response mapping $\alpha^*(\beta)$. Weighted variants, such as taking convex combinations of the previous $\beta$ and $\alpha^*(\beta)$, can further improve stability. These iterative schemes are closely related in spirit to fictitious play but are tailored to our setting, where each player's policy depends on time, state, and the empirical distribution of others.

In Section~\ref{sec:iterations}, we define the Picard and weighted Picard iterations and prove that they converge to the unique equilibrium under standard convexity conditions, without assuming monotonicity in the measure variable. The proof leverages the uniqueness of the NLL equation and properties of the dynamic programming equation associated with the tagged player's control problem, given a fixed control $\beta$ used by the others. The convergence is geometric, as shown in Theorem~\ref{thm:convergence} and Proposition~\ref{prop:rate}, and provides theoretical guarantees for the numerical methods introduced in Section~\ref{sec:numerical}.  In addition, Theorem~\ref{th:cv-error} shows that numerical errors introduced at each step remain controlled over time, and Proposition~\ref{pro:eN} confirms that the computed strategies remain approximate equilibria even in the presence of such errors. The methods we propose  rely on iterative schemes and avoid directly solving the full NLL equation, which becomes an intractable task for large $N$ due to the number of coupled nonlinear ODEs equal to $d (N+d-1)!/(N!(d-1)!)$. The first approach numerically solves the dynamic programming equation for each $\beta$ using ODE solvers and applies weighted updates; this proves substantially more stable in practice. The second method combines Monte Carlo simulation with empirical risk minimization, which is a technique that has been used to solve optimal control problems in several recent works, see e.g.~\cite{HE,HEJ,hua2025efficient,RS,RST1,RST2,STY}. These algorithms are evaluated in Section~\ref{sec:examples}, where we apply them to the two-state Kuramoto synchronization models from~\cite{cecchin2019convergence, hofer2025synchronization} and a cyber-security model from~\cite{Kolokoltsov_2016}. The results demonstrate fast convergence and numerical stability.

The main contributions of this work are as follows. In short, we show that Markov perfect equilibria can be computed via Picard-type iterations in a class of symmetric, continuous-time, finite-state games. More precisely:
\begin{itemize}
    \item We prove in Theorem \ref{thm:convergence} the convergence of both standard and weighted Picard iterations without requiring monotonicity assumptions with respect to the other players' distribution, and obtain convergence rates in Proposition \ref{prop:rate}.
    \item We establish in Theorem \ref{th:cv-error} the robustness of the scheme to numerical errors in best response computations  by analyzing the propagation of errors through iterations, and in Proposition \ref{pro:eN} show that the scheme computes an approximate equilibrium.
    \item We propose numerical algorithms in Section \ref{sec:numerical} based on dynamic programming or optimization of neural networks, and validate their effectiveness through examples.
\end{itemize}
 
The remainder of the paper is organized as follows. Section~\ref{sec:game} introduces the finite-player game and the associated equations. Section~\ref{sec:iterations} defines the Picard-type iterations, and Section~\ref{sec:proofs} contains the convergence proofs. Section~\ref{sec:errors} analyzes error propagation under numerical approximation. Section~\ref{sec:numerical} presents the proposed computational methods, and Section~\ref{sec:examples} reports on numerical experiments.
\vspace{5pt}

\textbf{Notation. } Let $N$ and $d$ be two positive integers. All constants that we introduce in the sequel may depend on $N$ and $d$, and we do not explicitly show
these dependencies in our notations. We consider continuous-time Markov chains 
in a finite state space $\cX:=\{1,\ldots,d\}$.  There are $N+1$
identical players.  The \emph{state space} is defined as
$\sX:=\cX\times\Sigma^{d-1}_N$,  where
$$
\Sigma^{d-1}_N := \big\{\mu=(\mu_1,\ldots, \mu_d)\ :\
\mu_x \in\{0,1/N,2/N,\ldots,1\}^d \  \text{for  each}\ x \in \cX,\
\text{and}\ \ \mu_1+\ldots+\mu_d=1\big\},
$$
is the  \emph{discretized simplex}.
For $\mu=(\mu_1,\ldots,\mu_d) \in \Sigma^{d-1}_N$ and 
$x \in \cX$, we set $n^\mu_x:= N \mu_x$, so that 
$\mu=\tfrac{1}{N} (n^\mu_1,\ldots, n^\mu_d)$.  
We emphasize that, as $\sX$ is a discrete set, no regularity or
growth assumptions on functions of $(x,\mu)$ are needed in the sequel.

For $x \in \cX$, we will denote by $e_x\in\{0,1\}^d$ the $x$-th unit vector, and define  
$$
	e_{x,z} := \frac1N(e_x-e_z),  \qquad x, z \in \cX.
$$

We set $\R_+:= [0,\infty)$.
For a \emph{rate function} 
$\alpha : \cX \mapsto \R_+^{d-1}$,
and $x, x' \in \cX$ with $x\neq x'$, we write
$\alpha_{x'}(x)$ for the rate of 
transitioning from state $x$ to $x'$.  Conversely, for 
given $x \in \cX$ and $a=(a_1,\ldots,a_{d-1}) \in \R_+^{d-1}$, 
we  define 
the corresponding rate function by $\alpha_{y}(x)= a_{y}$,
for $y<x$,  and $\alpha_{y}(x)= a_{y-1}$, for $y>x$. 

For $k\geq1$ and $z \in \R^k$, $|z|_2^2=(z_1^2+\ldots+z_k^2)$ is the standard Euclidean norm.
For $\varphi :\sX \mapsto \R^k$ we write $|\varphi|^2_2:=\sum_{(x,\mu)\in\sX}|\varphi(x,\mu)|_2^2$.
For $\psi:[0,T] \times \sX \mapsto \R^k$,  with a slight abuse of notation, we define
$\psi(t) : \sX\mapsto\R^k$ by
$\psi(t)(x,\mu):= \psi(t,x,\mu) \in \ \R^k$. Then, 
\begin{equation}
    \label{eq:norm}
    |\psi(t)|_2^2 = \sum_{(x,\mu)\in\sX}|\psi(t,x,\mu)|_2^2,
    \qquad t \in [0,T].
\end{equation}
$\cS^k$ is the Banach space of all continuous functions $\psi :[0,T] \times \sX \mapsto \R^k$ 
endowed  with the norm 
\begin{equation}
\label{eq:snorm}
\|\psi\|_{\cS^k}:=\max_{t\in[0,T]}  \ |\psi(t)|_2.
\end{equation}
We omit the dependence on dimension $k$ when it is clear from the context.


\section{Finite-player game}
\label{sec:game}

In the section, we introduce a stochastic differential game
between $N+1$ players, indexed by $0,\ldots,N$, in continuous 
time over a finite horizon $[0,T]$ with a finite state space 
$\cX:=\{1,\ldots,d\}$. It is shown in~\cite{HSY}
that when the players are exchangeable,
then all MPEs are symmetric across players and
one can simply tag and follow one player.  Accordingly, 
among the $N+1$ players, 
player $0$ is tagged and called the \emph{tagged player}.
We refer to the remaining untagged $N$ players  as \emph{other players}. 

\subsection{Dynamics}
\label{sec:dynamics}

The state process of the problem is the pair $\Xi_t=(X_t,\mu_t)
\in \sX = \cX\times\Sigma^{d-1}_N$, 
consisting of the position of the tagged player $X_t \in \cX$
and the fractions of other players across states 
$\mu_t \in \Sigma^{d-1}_N$.
More precisely, for given fixed (uncontrolled) rates 
$$
(\lambda^0,\lambda^1):\sX\mapsto\R_+^{d-1}\times\R_+^{d-1}
$$
and \emph{feedback controls}
$$
(\alpha,\beta):[0,T]\times\sX\mapsto\R_+^{d-1} 
\times\R_+^{d-1},
$$
the \emph{controlled} state process 
$\Xi_t=(X_t,\mu_t) \in \sX$ is a 
time-inhomogeneous Markov chain with an infinitesimal generator
given by
$$
\sL^{\alpha, \beta} := \sL^{\alpha}_x + \sL^{\beta}_\mu.
$$
Here, using the rate convention introduced in the notation
part of the Introduction,
for any $\varphi:\sX\mapsto\R$ and 
$(t,x,\mu)\in[0,T]\times \sX$ with $\mu=(n^\mu_1,\ldots,n^\mu_d)/N$, 
\begin{align*}
(\sL^{\alpha}_x\varphi)(t,x,\mu) &:= \sum_{y\neq x} \left(\lambda^0_y(x,\mu)+\lambda^1_y(x,\mu)\alpha_y(t,x,\mu)\right)[\varphi(y,\mu)-\varphi(x,\mu)], \\
(\sL^{\beta}_\mu\varphi)(t,x,\mu) &:= \sum_{y\neq z} n^\mu_z \left(\lambda^0_y(z,\mu+e_{x,z})+\lambda^1_y(z,\mu+e_{x,z})
\beta_y(t,z,\mu+e_{x,z})\right)[\varphi(x,\mu+e_{y,z})-\varphi(x,\mu)],
\end{align*}
where we recall that $e_x\in\{0,1\}^d$ denotes the $x$-th unit vector and $e_{x,z} := \frac1N(e_x-e_z)$ is interpreted as a vector which adds the position of the tagged player in state $x\in\cX$ and
subtracts the position of a generic untagged player in state
$z\in\cX$.  
Equivalently, the transition rates of 
$\Xi_t=(X_t,\mu_t)$ are given by
\begin{align*}
\text{rate from}\ (x,\mu)\ \text{to}\ (x',\mu)& =
\lambda^0_{x'}(x,\mu)+\lambda^1_{x'}(x,\mu)\alpha_{x'}(t,x,\mu),
\quad x\neq x',\\
\text{rate from}\ (x,\mu)\ \text{to} \ (x,\mu+e_{y,z})& =
n^\mu_z \, [\lambda^0_y(z,\mu+e_{x,z})+\lambda^1_y(z,\mu+e_{x,z})
\beta_y(t,z,\mu + e_{x,z})],\quad z\neq y.
\end{align*}
Finally, we let $\P^{\alpha\otimes\beta}$ be the probability
measure induced by this Markov chain.

\subsection{Control problem of the tagged player}
\label{ss:control}
We assume that  all players use the same running and terminal cost functions 
$$
\ell: \sX \times\R_+^{d-1}\mapsto\R_+,\qquad g:\sX\mapsto\R_+.
$$
Throughout the paper, and sometimes implicitly,
we assume that $\ell$ is strongly convex:

\begin{Asm}
\label{asm:cost}
The running cost $\ell$ is continuous and there exists $\gamma>0$ such that, for all 
$(x,\mu,a,a')\in\sX\times\R_+^{d-1}\times\R_+^{d-1}$,
$$
\tau \ell(x,\mu, a')+(1-\tau)\ell(x,\mu, a)
- \ell(x,\mu,\tau a' +(1-\tau)a)\geq 
\gamma \tau (1-\tau) |a'-a|_2^2, \qquad \forall \tau \in [0,1].
$$
\end{Asm}
\noindent
We continue to define the \emph{admissible controls}.

\begin{Def}
\label{def:admissible}
{\rm{Any Lipschitz continuous function $\alpha:[0,T]\times\sX\mapsto \R_+^{d-1}$
is  called an}} admissible control {\rm{and  $\sA$ is  
the set of all admissible controls. 
For $\alpha\in\sA$, $t \in [0,T]$, $y\neq x \in \cX$, and
$\mu \in \Sigma_N^{d-1}$,
the rate $\alpha_y(t,x,\mu)$ is defined  as in
in the notation part of the Introduction.}}
\end{Def}

For a given admissible control $\beta\in\sA$ that every other player uses, 
the tagged player solves the following stochastic optimal control problem
$$
v^\beta(t,x,\mu):=\inf_{\alpha\in\sA}\ J(t,x,\mu,\alpha;\beta),\quad (t,x,\mu)\in[0,T]\times\sX,
$$
where the tagged player's \emph{cost functional} is given by
\begin{equation}\label{eq:payoff} 
J(t,x,\mu,\alpha;\beta) := \E^{\alpha\otimes\beta}\Bigl[ \int_t^T \ell(\Xi_u,\alpha(u,\Xi_u))\,\d u + g(\Xi_T) \,|\, \Xi_t=(x,\mu)\Bigr],\quad (t,x,\mu)\in[0,T]\times\sX,
\end{equation}
and $\E^{\alpha\otimes\beta}[\,\cdot\,]$ denotes the expectation with respect to the measure $\P^{\alpha\otimes\beta}$.

For any $\varphi:\cX\mapsto\R$ and $x \in \cX$, define
the first difference vector of $\varphi$ at $x$ by
$$
\Delta_x \varphi:=(\varphi(y)-\varphi(x))_{y\in\cX\setminus\{x\}} \ \in\ \R^{d-1}.
$$
It is classical that the value function $v^\beta$ solves the following dynamic programming equation
\cite{FS},
\begin{equation}\label{eq:HJB-beta}\tag{HJB$(\beta)$}
\left\{
\begin{aligned}
-\frac{\d}{\d t} v^\beta(t,x,\mu) &= H(x,\mu,\Delta_x v^\beta(t,\cdot,\mu)) 
+ \sL^\beta_\mu v^\beta(t)(t,x,\mu),\\
v^\beta(T,x,\mu)&=g(x,\mu),
\end{aligned}
\right.
\end{equation}
for $(t,x,\mu)\in [0,T]\times\sX$, where  the \emph{Hamiltonian} $H$ is given by,
$$
H(x,\mu,p) := \inf_{a\in\R_+^{d-1}} \ \Bigl\{\ell(x,\mu,a) + \sum_{y\neq x}(\lambda^0_y(x,\mu)+\lambda^1_y(x,\mu)a_y)\,p_y\Bigr\},\quad (x,\mu,p)\in\sX\times\R^{d-1}.
$$
Due to the strong convexity of $\ell$ in Assumption~\ref{asm:cost}, 
there is a unique minimizer $\hat\alpha:\sX\times\R^{d-1}\mapsto\R_+^{d-1}$
given by
$$
\hat\alpha(x,\mu,p):= \underset{a\in\R_+^{d-1}}
{\text{arg\,min}} \ \Bigl\{ \ell(x,\mu,a) 
+ \sum_{y\neq x}a_y\lambda^1_y(x,\mu) p_y \Bigr\},
\qquad (x,\mu,p)\in \sX \times \R^{d-1}.
$$

Since $\lambda^1$ is a function on a finite set,
it is bounded.  This bound,  the conjugate correspondence theorem, and the $\gamma$-strong convexity assumption on $\ell(x,\mu,\cdot)$ 
directly imply the following Lipschitz estimate.

\begin{Lem}\label{lem:control-lip}
There is a constant $c^*_a>0$
such that 
$$
|\hat\alpha(x,\mu,p)-\hat\alpha(x,\mu,p')|_2 \leq  c^*_a\  |p-p'|_2,
\qquad  (x,\mu,p,p')\in\sX\times\R^{d-1}\times\R^{d-1}. 
$$
\end{Lem}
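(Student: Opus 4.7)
The plan is to reduce to a standard Lipschitz estimate on the argmin of a strongly convex function perturbed by a linear functional. Since $\lambda^1$ is a $\R_+^{d-1}$-valued function on the finite set $\sX$, the quantity
$$
\Lambda := \max\bigl\{\lambda^1_y(x,\mu) : y \in \cX\setminus\{x\},\ (x,\mu)\in\sX\bigr\}
$$
is finite. Setting $q_y := \lambda^1_y(x,\mu)\,p_y$ and $q'_y := \lambda^1_y(x,\mu)\,p'_y$, the defining minimization can be rewritten as
$$
\hat\alpha(x,\mu,p) = \underset{a\in\R_+^{d-1}}{\text{arg\,min}}\,\bigl\{\ell(x,\mu,a) + \langle a, q\rangle\bigr\},
$$
and $|q-q'|_2 \leq \Lambda\,|p-p'|_2$. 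It therefore suffices to bound $|a-a'|_2$ by a constant multiple of $|q-q'|_2$, where $a := \hat\alpha(x,\mu,p)$ and $a' := \hat\alpha(x,\mu,p')$.

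For the core estimate, fix $(x,\mu)$ and abbreviate $f := \ell(x,\mu,\cdot)$. First-order optimality for the convex minimization of $f(\cdot) + \langle \cdot, q\rangle$ over the closed convex cone $\R_+^{d-1}$ provides subgradients $\xi \in \partial f(a)$ and $\xi'\in\partial f(a')$ such that, by testing the variational inequality against $b = a'$ and $b = a$ respectively,
$$
\langle \xi + q,\ a'-a\rangle \geq 0 \qquad\text{and}\qquad \langle \xi' + q',\ a-a'\rangle \geq 0.
$$
Summing these and applying the Cauchy-Schwarz inequality yields $\langle \xi-\xi',\ a-a'\rangle \leq |q-q'|_2\,|a-a'|_2$. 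On the other hand, the strong convexity in Assumption~\ref{asm:cost} is equivalent to convexity of $f(\cdot) - \gamma|\cdot|_2^2$ (simply subtract a common quadratic from both sides), which gives the subgradient monotonicity estimate $\langle \xi-\xi',\ a-a'\rangle \geq 2\gamma\,|a-a'|_2^2$. Combining the two bounds produces $|a-a'|_2 \leq (2\gamma)^{-1}|q-q'|_2 \leq \Lambda(2\gamma)^{-1}|p-p'|_2$, so the conclusion holds with $c^*_a := \Lambda/(2\gamma)$.

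The only delicate point is that $\ell$ is assumed merely continuous and strongly convex, not differentiable, which forces the use of subgradients above; the required nonemptiness of $\partial f$ is guaranteed by convexity and continuity, so I do not expect any genuine obstacle here. Alternatively, as suggested by the hint in the paper, one may invoke the conjugate correspondence theorem directly: the Fenchel conjugate of the extended-real function $a \mapsto \ell(x,\mu,a) + \chi_{\R_+^{d-1}}(a)$ is finite-valued with Lipschitz gradient of constant $1/(2\gamma)$, and $\hat\alpha(x,\mu,p)$ coincides with that gradient evaluated at $-q$, recovering the same bound without explicitly manipulating subgradients.
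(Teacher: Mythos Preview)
Your argument is correct, and the alternative you sketch via the conjugate correspondence theorem is exactly what the paper invokes (in the single sentence preceding the lemma; no formal proof is given). One small correction to your primary route: continuity and convexity alone do \emph{not} guarantee $\partial f(a)\neq\emptyset$ at boundary points of $\R_+^{d-1}$ (take $f(t)=-\sqrt{t}+t^2$ on $[0,\infty)$ at $t=0$), so the split ``$\xi\in\partial f(a)$ plus normal cone'' is not automatic. The clean fix is to work directly with the extended function $\tilde f:=\ell(x,\mu,\cdot)+\chi_{\R_+^{d-1}}$: optimality gives $-q\in\partial\tilde f(a)$ and $-q'\in\partial\tilde f(a')$, and $2\gamma$-strong monotonicity of $\partial\tilde f$ then yields $2\gamma|a-a'|_2^2\le\langle q'-q,a-a'\rangle$, recovering your bound without any splitting. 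This is of course equivalent to the conjugate-correspondence formulation you already mention.
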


\subsection{Markov perfect equilibria and the NLL equation}
\label{ss:NLL}

Consider the following procedure: for $\beta\in\sA$, solve \eqref{eq:HJB-beta} to obtain the unique classical solution $v^\beta$. Then, the map
\begin{equation}
\label{eq:alpha*}
\alpha^*(\beta)(t,x,\mu) := \hat\alpha(x,\mu, \Delta_x v^\beta (t,\cdot,\mu)),\quad (t,x,\mu)\in[0,T]\times\sX,
\end{equation}
is the unique optimal feedback control in $\sA$. This defines a functional $\beta\mapsto\alpha^*(\beta)$.

\begin{Def}
\label{def:MPE}
{\rm{We call a feedback function $\alpha\in\sA$ a}}
symmetric Markov perfect equilibrium {\rm{(MPE) 
for the $N+1$ player game if $\alpha = \alpha^*(\alpha)$.}}
\end{Def}

Clearly, a feedback function $\alpha\in\sA$ is a MPE if, and only if, 
$$
J(t,x,\mu,\alpha;\alpha)\leq J(t,x,\mu,\tilde \alpha;\alpha),\qquad \forall (t,x,\mu,\tilde \alpha)\in[0,T]\times\sX\times\sA.
$$

The Nash-Lasry-Lions equation for the $N+1$ player game,
referred to as the $N$-NLL equation to emphasize its dependence on
the number of players,  is defined as the following (backward) system of ODEs:
\begin{equation}
\label{eq:N-NLL}\tag{$N$-NLL}
\begin{cases}
 &-\frac{\d}{\d t} v(t,x,\mu) = H(x,\mu,\Delta_x v(t,\cdot,\mu))+\sL^{\alpha}_\mu v(t,\cdot)(t,x,\mu),\\
 &\alpha(t,x,\mu) = \hat\alpha(x,\mu,\Delta_xv(t,\cdot,\mu)),\\
 &v(T,x,\mu)=g(x,\mu),
 \end{cases}
\end{equation}
for $(t,x,\mu)\in[0,T]\times\sX$. Gomes, Mohr and Souza~\cite[Theorem 6]{GMS1}  proved the following result.

\begin{Prop}
The \eqref{eq:N-NLL} equation admits a unique classical solution $v$
and  the function
$$
\alpha(t,x,\mu) := \hat\alpha(x,\mu,\Delta_xv(t,\cdot,\mu)),\quad (t,x,\mu)\in[0,T]\times\sX,
$$
is the unique Markov perfect equilibrium of the $N+1$ player game.
\end{Prop}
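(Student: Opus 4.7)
The plan is to treat \eqref{eq:N-NLL} as a system of ordinary differential equations on the finite-dimensional vector space indexed by $\sX$, and to obtain existence and uniqueness by the Cauchy--Lipschitz theorem supplemented by an a priori bound to rule out finite-time blow-up. Writing the system in the form $-\dot v(t) = F(v(t))$ with terminal datum $v(T)=g$, the right-hand side consists of (i) the Hamiltonian term $H(x,\mu,\Delta_x v)$, which is Lipschitz in its last argument by the conjugate correspondence theorem (Assumption~\ref{asm:cost} together with the boundedness of $\lambda^1$ on the finite set $\sX$), and (ii) the transport term $\sL^{\alpha}_\mu v$ with $\alpha=\hat\alpha(\cdot,\Delta_x v)$. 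The latter is bilinear in $v$ through $\hat\alpha$ (Lipschitz in the gradient by Lemma~\ref{lem:control-lip}) and through the differences $v(x,\mu+e_{y,z})-v(x,\mu)$, so $F$ is locally Lipschitz with at most quadratic growth. The Cauchy--Lipschitz theorem therefore yields a unique maximal classical solution running backward from $T$ on some interval $(\tau_*,T]$.

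The main obstacle is to show $\tau_* < 0$, i.e.\ global existence on the whole horizon $[0,T]$. I would do this by identifying any classical solution $v$ on $(\tau_*,T]$ with a value function. Specifically, setting $\alpha(t,x,\mu):=\hat\alpha(x,\mu,\Delta_x v(t,\cdot,\mu))$ and substituting into \eqref{eq:N-NLL} turns it into the linear terminal-value problem \eqref{eq:HJB-beta} with $\beta=\alpha$. A standard verification argument, exploiting that $\hat\alpha$ is the unique minimizer in the Hamiltonian, then identifies $v=v^\alpha$, the value of the tagged player when all others use $\alpha$. Comparing to the admissible baseline control $\tilde\alpha \equiv 0$ yields the uniform a priori bound
\begin{equation*}
0 \le v(t,x,\mu) \le \max_{(x,\mu)\in\sX} g(x,\mu) + T\,\max_{(x,\mu)\in\sX}\ell(x,\mu,0),
\end{equation*}
which, by the classical continuation theorem for locally Lipschitz ODEs, forbids blow-up and extends the solution to all of $[0,T]$.

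Once existence and uniqueness for \eqref{eq:N-NLL} are established, the equilibrium statement is essentially tautological. For existence, the identification $v=v^\alpha$ obtained above shows that the feedback $\alpha=\hat\alpha(\cdot,\Delta_x v)$ coincides with the optimal feedback $\alpha^*(\alpha)$ from \eqref{eq:alpha*}, so $\alpha$ is an MPE by Definition~\ref{def:MPE}. For uniqueness, suppose $\tilde\alpha\in\sA$ satisfies $\tilde\alpha=\alpha^*(\tilde\alpha)$. Then $v^{\tilde\alpha}$ solves \eqref{eq:HJB-beta} with $\beta=\tilde\alpha$, and its optimal feedback satisfies $\tilde\alpha=\hat\alpha(\cdot,\Delta_x v^{\tilde\alpha})$; substituting this identity into \eqref{eq:HJB-beta} shows that $v^{\tilde\alpha}$ is a classical solution of \eqref{eq:N-NLL}. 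Uniqueness of that equation forces $v^{\tilde\alpha}=v$, and consequently $\tilde\alpha=\alpha$, completing the argument.
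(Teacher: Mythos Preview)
The paper does not give its own proof of this proposition; it simply attributes the result to Gomes, Mohr and Souza~\cite[Theorem~6]{GMS1} and states it without argument. Your proposal is therefore not duplicating anything in the paper but rather supplying the proof the paper omits, and it is correct and is the natural route: cast \eqref{eq:N-NLL} as a finite-dimensional ODE with locally Lipschitz right-hand side, invoke Cauchy--Lipschitz for local well-posedness, and upgrade to global existence via the a~priori bound obtained by identifying any local solution with the tagged player's value function $v^\alpha$ and comparing against the zero control. That a~priori bound is precisely the constant $c_v$ that the paper later records in Lemma~\ref{lem:constants}, so your argument meshes cleanly with the rest of the text. The equilibrium statements are, as you say, immediate once ODE uniqueness is in hand: $v=v^\alpha$ gives $\alpha=\alpha^*(\alpha)$, and conversely any MPE $\tilde\alpha$ produces a classical solution $v^{\tilde\alpha}$ of \eqref{eq:N-NLL}, whence $v^{\tilde\alpha}=v$ and $\tilde\alpha=\alpha$.
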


\section{Picard-type iterations}
\label{sec:iterations}
Recall that the tagged
player's value function $v^\beta$ is the 
unique classical solution to \eqref{eq:HJB-beta}
corresponding to the untagged player control $\beta$,
and the best response $\alpha^*(\beta)$ of the tagged player is defined by \eqref{eq:alpha*}.
For $\rho \in [0,1)$,
starting from a fixed initial control $\beta^{(0)}\in\sA^*$,
for $n\ge 1$, we recursively define
$v^{(n)}:=v^{\beta^{(n-1)}}$ and 
\begin{equation}\label{eq:control-seq}
\alpha^{(n)}(t,x,\mu) :=\alpha^*(\beta^{(n-1)})(t,x,\mu)
= \hat \alpha (x,\mu,\Delta_x v^{(n)}(t,\cdot,\mu)),
\quad (t,x,\mu)\in[0,T]\times\sX,
\end{equation}
along with the untagged players' updated control
$$
\beta^{(n)} := \we \beta^{(n-1)} + (1-\we)\alpha^{(n)}, 
\qquad n \ge 1.
$$ 

\begin{Def}
\label{def:iterations}
{\rm{The iterative scheme with $\we=0$ is called the}}
Picard algorithm, {\rm{and the 
one with  $\we\in(0,1)$ the}} weighted Picard algorithm.
\end{Def}

Our main convergence result is the following.
When necessary to emphasize the dependence on the parameter $\we$,  we 
sometimes write $v^{(n)}_\we$ for the corresponding value functions.

When necessary to emphasize the dependence on the parameter $\we$,  we 
sometimes write $v^{(n)}_\we$ for the corresponding value functions.

\begin{Thm}
\label{thm:convergence}
For any $\we \in [0,1)$ and any initial $\beta^{(0)}\in\sA^*$, the sequence of value function $(v_\we^{(n)})_{n\geq1}$ converges uniformly 
to the unique solution $v$ of the
\eqref{eq:N-NLL} equation, and the sequence of controls $(\alpha_\we^{(n)})_{n\geq1}$ 
converges uniformly to the unique Markov perfect equilibrium given by
$\alpha(t,x,\mu) = \hat\alpha(x,\mu,\Delta_xv(t,\cdot,\mu))$, for $(t,x,\mu)\in[0,T]\times\sX$. 
\end{Thm}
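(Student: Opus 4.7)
The strategy is to derive, in a suitably time-weighted norm, a strict geometric contraction for the control error $\beta^{(n)}-\alpha$, where $\alpha$ is the unique MPE furnished by the Gomes--Mohr--Souza proposition stated above, and then to invoke uniqueness of \eqref{eq:N-NLL} to identify the limit. First I would establish uniform a priori bounds: since $\ell,g\ge 0$ and the data live on a finite state space, the stochastic control representation of $v^\beta$ gives a uniform $L^\infty$ bound $M$ with $\|v\|_\infty,\,\|v^{(n)}\|_\infty\le M$, independent of $n$ (for instance by comparing against the null control). This confines every relevant argument of $H$ and $\hat\alpha$ to a compact set, so that $H(x,\mu,\cdot)$ is Lipschitz on the relevant range and, together with Lemma~\ref{lem:control-lip}, yields uniform Lipschitz constants for use below.

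Setting $u^{(n)}:=v^{(n)}-v$ and subtracting \eqref{eq:HJB-beta} (with $\beta=\beta^{(n-1)}$) from \eqref{eq:N-NLL}, I obtain the backward linear equation
\begin{equation*}
-\partial_t u^{(n)} = \bigl[H(x,\mu,\Delta_x v^{(n)})-H(x,\mu,\Delta_x v)\bigr] + \sL^{\beta^{(n-1)}}_\mu u^{(n)} + \bigl(\sL^{\beta^{(n-1)}}_\mu-\sL^{\alpha}_\mu\bigr)v,
\end{equation*}
with terminal condition $u^{(n)}(T,\cdot,\cdot)\equiv 0$. The first bracket is controlled by a multiple of $|u^{(n)}|_2$ via Lipschitz continuity of $H$ in $p$; the second is a bounded linear operator applied to $u^{(n)}$; and the third---the crucial forcing term---is pointwise dominated by $C_v|\beta^{(n-1)}-\alpha|_2$, using only the bound $\|v\|_\infty\le M$, boundedness of $\lambda^1$, and the algebraic structure of $\sL^\cdot_\mu$ as a linear function of the rates. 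These three estimates combine into $-\tfrac{d}{dt}|u^{(n)}(t)|_2\le C_1|u^{(n)}(t)|_2 + C_2|\beta^{(n-1)}(t)-\alpha(t)|_2$.

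Introducing the weighted norm $|\!|\!|\psi|\!|\!|_\lambda:=\sup_{t\in[0,T]} e^{-\lambda(T-t)}|\psi(t)|_2$ and applying a backward Grönwall argument yields, for any $\lambda>C_1$, the bound $|\!|\!|u^{(n)}|\!|\!|_\lambda \le \frac{C_2}{\lambda-C_1}\,|\!|\!|\beta^{(n-1)}-\alpha|\!|\!|_\lambda$. Lemma~\ref{lem:control-lip} upgrades this to $|\!|\!|\alpha^{(n)}-\alpha|\!|\!|_\lambda\le C_3|\!|\!|u^{(n)}|\!|\!|_\lambda$, and combining with the update rule $\beta^{(n)}-\alpha=\we(\beta^{(n-1)}-\alpha)+(1-\we)(\alpha^{(n)}-\alpha)$ gives
\begin{equation*}
|\!|\!|\beta^{(n)}-\alpha|\!|\!|_\lambda \le \Bigl[\we+(1-\we)\,\frac{C_2 C_3}{\lambda-C_1}\Bigr]\,|\!|\!|\beta^{(n-1)}-\alpha|\!|\!|_\lambda.
\end{equation*}
For any fixed $\we\in[0,1)$ I can then choose $\lambda$ large enough that the bracketed factor is strictly less than $1$, obtaining geometric contraction of $\beta^{(n)}\to\alpha$, and hence of $v^{(n)}\to v$, in $|\!|\!|\cdot|\!|\!|_\lambda$. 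Equivalence of $|\!|\!|\cdot|\!|\!|_\lambda$ and $\|\cdot\|_\cS$ on the compact interval $[0,T]$ then yields the claimed uniform convergence.

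The main obstacle is securing the forcing bound $|(\sL^{\beta^{(n-1)}}_\mu-\sL^{\alpha}_\mu)v|_2\le C_v|\beta^{(n-1)}-\alpha|_2$ with a constant $C_v$ uniform in $n$, since it is this bound that closes the recursion in terms of the previous control error; this rests squarely on the uniform $L^\infty$ bound on $v$ from the first step. A secondary but conceptually essential subtlety is that in the unweighted norm $\|\cdot\|_\cS$ the analogous Grönwall factor grows like $e^{C_1 T}$, which would restrict the argument to small horizons; the time-weighted norm converts that exponential growth into the adjustable denominator $\lambda-C_1$ and is precisely what allows the argument to accommodate the full range $\we\in[0,1)$ for arbitrary $T$.
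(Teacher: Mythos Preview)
Your approach is correct and, in fact, conceptually cleaner than the paper's. The paper does not compare to the known limit; instead it shows that $(v^{(n)})_n$ is Cauchy in $\cS$ by controlling the \emph{consecutive} differences $m^{(n)}(t):=|v^{(n+1)}(t)-v^{(n)}(t)|_2^2$. For $\we=0$ this yields the recursion $m^{(n+1)}(t)\le c\int_t^T m^{(n)}(s)\,\d s$, which iterates to the factorial bound $m^{(n)}(t)\le \hat c(\hat c(T-t))^n/n!$; for $\we\in(0,1)$ the recursion couples $m^{(n)}$ to a second quantity $\Gamma^{(n)}:=|\alpha^{(n+1)}-\beta^{(n)}|_2^2$, and the resulting two-term recursion is solved explicitly via Laguerre polynomials and their generating function to prove summability. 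Only afterwards does the paper identify the limit by passing to the limit in \eqref{eq:HJB-beta}.

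Your route sidesteps this combinatorics by exploiting the existence of the MPE $(\alpha,v)$ from the outset and recasting the stability estimate as a contraction in the exponentially weighted norm $|\!|\!|\cdot|\!|\!|_\lambda$; the weight absorbs the $e^{C_1(T-t)}$ Gr\"onwall factor into the denominator $\lambda-C_1$, which is exactly what the paper achieves indirectly through the time integral. What the paper's approach buys is that it never uses existence/uniqueness of \eqref{eq:N-NLL} as an input (it re-proves it as a by-product), and the explicit Laguerre bounds feed directly into the rate Proposition~\ref{prop:rate} and the error analysis. What your approach buys is brevity, a transparent geometric rate $[\we+(1-\we)C_2C_3/(\lambda-C_1)]^n$, and a uniform treatment of all $\we\in[0,1)$ in one line. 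Two small points worth making explicit in a write-up: the bound on $\sL^{\beta^{(n-1)}}_\mu u^{(n)}$ with a constant independent of $n$ requires $\beta^{(n-1)}\in\sA^*$, which follows inductively from $\beta^{(0)}\in\sA^*$, Lemma~\ref{lem:constants}, and convexity of $\sA^*$; and the differential inequality for $|u^{(n)}(t)|_2$ should be interpreted a.e.\ (or derived for $|u^{(n)}(t)|_2^2$ as the paper does) to avoid differentiability issues at zeros.
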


We prove this theorem in Section~\ref{sec:proofs} below. 
As the proof  is more transparent
for $\we=0$, we first analyze this case
and consider the general case later in subsection~\ref{ss:LPicard}.
We also have the following exponential convergence rates
proved in the subsection \ref{ss:rate} below.

\begin{Prop}
\label{prop:rate}
For $ \we \in(0,1)$, there are $\gl \in (0,1)$ and  $c_\we>0$ satisfying,
$$
\sup_{t \in [0,T]} |v^{(n)}_\we(t)-v(t)|_2  \le c_\we  \gl^n, \qquad n \ge1,
$$
where $| \cdot|_2$ is as in \eqref{eq:norm}. The thresholds
$\gl$ are non-decreasing in $\we$ with $\lim_{\we \downarrow 0} \gl=0$.
Additionally, for every $\gamma>0$ there exists $\hat{c}_\gamma>0$
such that
$$
\sup_{t \in [0,T]} |v_{0}^{(n)}(t)-v(t)|_2  \le \hat{c}_\gamma \gamma^n, \qquad n \ge1,
$$
\end{Prop}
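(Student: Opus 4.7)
The plan is to quantify the convergence argument of Theorem~\ref{thm:convergence} by extracting a linear integral recursion on the error, and then to handle the two cases separately using either the classical Picard factorial bound or a weighted sup-norm contraction. Let $\alpha$ denote the unique MPE and $v = v^\alpha$ its value function, and set $E_n(t) := |\beta^{(n)}(t) - \alpha(t)|_2$ and $F_n(t) := |v^{(n)}_\we(t) - v(t)|_2$. Two preparatory estimates, both implicit in the proof of Theorem~\ref{thm:convergence}, are needed. First, subtracting \eqref{eq:HJB-beta} for $\beta = \beta^{(n-1)}$ from the one for $\beta = \alpha$, using that $H(x,\mu,\cdot)$ is Lipschitz in $p$ (a consequence of Assumption~\ref{asm:cost}) and that $\sL^\beta_\mu$ is affine in $\beta$ with bounded coefficients, a backward Gronwall argument absorbs the $|v^\beta-v^\alpha|$ term on the right-hand side into an exponential prefactor and yields the stability bound
$$F_n(t) \le C_1 \int_t^T E_{n-1}(s)\,ds$$
for a constant $C_1$ independent of $n$ and of $\beta^{(n-1)}$. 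Second, Lemma~\ref{lem:control-lip} together with the elementary bound $|\Delta_x \varphi|_2 \le C_\Delta |\varphi|_2$ gives $|\alpha^{(n)}(t) - \alpha(t)|_2 \le c^*_a C_\Delta F_n(t)$. Inserting these into the identity $\beta^{(n)} - \alpha = \we(\beta^{(n-1)} - \alpha) + (1-\we)(\alpha^{(n)} - \alpha)$ produces the master recursion
$$E_n(t) \le \we E_{n-1}(t) + (1-\we)\,C_2 \int_t^T E_{n-1}(s)\,ds, \qquad C_2 := c^*_a C_\Delta C_1.$$

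For $\we = 0$ the recursion reduces to $E_n(t) \le C_2 \int_t^T E_{n-1}(s)\,ds$. Iterating from the uniform bound $M := \sup_t E_0(t) < \infty$ yields the classical Picard estimate $E_n(t) \le M (C_2 T)^n / n!$. Because $(C_2 T)^n/n!$ decays faster than any geometric sequence, for every $\gamma > 0$ the constant $\hat c_\gamma := C_1 M \sup_{n\ge 1} (C_2 T)^{n-1}/((n-1)!\,\gamma^n)$ is finite, and composing once more with the stability estimate gives $\sup_t F_n(t) \le \hat c_\gamma \gamma^n$.

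For $\we \in (0,1)$, I would introduce the exponential weight $\phi_\lambda(t) := e^{\lambda(T-t)}$ and the weighted norm $\|E\|_\lambda := \sup_{t\in[0,T]} \phi_\lambda(t)^{-1} E(t)$. Since $\int_t^T \phi_\lambda(s)\,ds \le \phi_\lambda(t)/\lambda$, the master recursion gives directly $\|E_n\|_\lambda \le [\we + (1-\we)C_2/\lambda]\,\|E_{n-1}\|_\lambda$. Choosing $\lambda := C_2/\we$ produces the threshold $\gl := \we(2 - \we)$, which lies in $(0,1)$ for $\we \in (0,1)$, is monotone non-decreasing in $\we$, and satisfies $\lim_{\we\downarrow 0}\gl = 0$. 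Iterating in the weighted norm and then converting back via $F_n(t) \le C_1 e^{\lambda T}/\lambda \cdot \|E_{n-1}\|_\lambda$ absorbs the exponential prefactor into a finite constant $c_\we$ and yields the claimed bound $\sup_t F_n(t) \le c_\we \gl^n$. The main obstacle is establishing the stability estimate $F_n(t) \le C_1 \int_t^T E_{n-1}(s)\,ds$ with $C_1$ independent of $n$ and of the specific $\beta^{(n-1)}$. While this is in essence a linear ODE perturbation bound, the generator $\sL^\beta_\mu$ contains contributions $n^\mu_z \lambda^1 \beta$ that are linear in $\beta$ with coefficients scaling in $N$, so one must carefully verify that $C_1$ depends only on $N$ and the data, which then suffices to close the iteration uniformly.
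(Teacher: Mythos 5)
Your proposal is correct in substance but takes a genuinely different route from the paper. The paper controls the consecutive increments $m^{(n)}(t)=|v^{(n+1)}(t)-v^{(n)}(t)|_2^2$ by an explicit combinatorial majorant $M^{(n)}$ (a binomial sum identified with Laguerre polynomials), then sums the telescoping series and bounds the tail geometrically; your argument instead measures the error directly against the limit, extracts the linear integral recursion $E_n(t)\le \we E_{n-1}(t)+(1-\we)C_2\int_t^T E_{n-1}(s)\,\d s$, and closes it with an exponentially weighted sup-norm (Bielecki norm), which turns the recursion into a strict contraction with factor $\we+(1-\we)C_2/\lambda$. This is more elementary, avoids the binomial identities entirely, and yields the clean threshold $\gl=\we(2-\we)$, which is non-decreasing and tends to $0$ as $\we\downarrow 0$ exactly as the statement asserts; your $\we=0$ case via the factorial Picard bound is likewise a clean, self-contained version of what the paper leaves to \emph{mutatis mutandis}.

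One point to make explicit: your stability estimate $F_n(t)\le C_1\int_t^T E_{n-1}(s)\,\d s$ is \emph{not} the paper's Lemma~\ref{lem:value-leq-beta}, which bounds the \emph{squared} quantity $|v^\beta(t)-v^{\tilde\beta}(t)|_2^2$ by $c^*\int_t^T|\tilde\beta(s)-\beta(s)|_2^2\,\d s$; the linear version does not follow from the squared one by Cauchy--Schwarz (the inequality goes the wrong way). It does hold, but you must prove it directly, e.g.\ by applying Gr\"onwall to $|w(t)|_2$ rather than $|w(t)|_2^2$, using $|w(t)|_2\le\int_t^T|\tfrac{\d}{\d s}w(s)|_2\,\d s$ together with the Lipschitz bound on $H$ from Corollary~\ref{cor:H} and the boundedness of the coefficients of $\sL^\beta_\mu$ (your closing remark about the $n^\mu_z\lambda^1\beta$ terms is the right concern, and it is resolved since all controls lie in $\sA^*$ and hence are uniformly bounded by $c_a$). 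Alternatively, you can run your entire scheme on the squared quantities $e_n=E_n^2$, $f_n=F_n^2$ — Jensen's inequality preserves the convex-combination structure, Lemma~\ref{lem:alpha-leq-value} gives $|\alpha^{(n)}(t)-\alpha(t)|_2^2\le c_0^2 f_n(t)$, and the same weighted-norm contraction goes through, so either repair is routine.
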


\begin{Rmk}{\rm{Theorem \ref{thm:convergence} might seem to contradict
the intuition developed for stationary or dynamic games in discrete time.
In these models, the convergence of the Picard-type iterations is not always expected,
and fictitious play and its generalizations were developed
to address this difficulty.   However,  the strong convexity 
Assumption~\ref{asm:cost} and the dynamic momentum inherent
in  continuous-time models, sets our model apart from the general 
discrete-time settings. These features  are the underlying reasons for the convergence 
proved in Theorem~\ref{thm:convergence}. 
In the structure studied here, players are not allowed to 
instantly move their state process to different state with probability
one; this would be possible only with infinite transition rates but large transition rates incur high costs.  A recent 
paper~\cite{HSY}  shows that time discretizations of the models 
we study have unique MPE only when the time
step $h$ is sufficiently small. The  continuous-time case, formally
corresponding to the limit $h$ approaching to zero, 
also has this uniqueness property~\cite{GMS1}. 
These  results highlight the difference between models with small 
$h$ or in continuous-time models and general discrete-time ones.  
In particular, the stiffness of the structure  we study 
prevents the oscillations that might emerge in discrete time.}}
\end{Rmk}

\section{Convergence proof}
\label{sec:proofs}
In this section, we prove Theorem~\ref{thm:convergence} and Proposition \ref{prop:rate}.
We start with  proving several technical point-wise estimates.
The case $\we=0$ is considered in Subsection~\ref{ss:Picard},
the general case in Subsection~\ref{ss:LPicard}
and the convergence rate is proved in Subsection~\ref{ss:rate}.

\subsection{Estimates}
\label{ss:bounds}

In this subsection, we prove several
estimates that are used in the convergence proofs with
constants depending on  all given functions and $T$, as well
as $N$ and $d$.  As $\sX$ is a finite set, we do not need
Lipschitz-type estimates for bounded functions 
on $\sX$, albeit the bounds on the differences deteriorate
as $N$ or $d$ gets larger even with uniform point-wise bounds.
We omit these dependencies in our notation. 

Recall  the best response function $\alpha^*(\beta)$ of
\eqref{eq:alpha*}, and that the value function $v^\beta$ of 
the tagged player, when other players use $\beta$,
solves the dynamic programming equation  \eqref{eq:HJB-beta}.

\begin{Lem}
\label{lem:constants}
There exist constants $c_v, c_a >0$ such that 
$$
\sup_{\beta\in\sA} \ \max_{(t,x,\mu)\in[0,T]\times\sX} \ |v^\beta(t,x,\mu)|\leq c_v,
\quad \sup_{\beta\in\sA} \ \max_{(t,x,\mu)\in[0,T]\times\sX} \  |\alpha^*(\beta)(t,x,\mu)|_2\leq c_a.
$$
\end{Lem}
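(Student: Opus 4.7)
The plan is to derive both bounds by comparing to the admissible zero control and then exploiting the finiteness of $\sX$. Observe that $\alpha \equiv 0 \in \sA$ (it is trivially Lipschitz in $t$), so it is a valid competitor for the tagged player regardless of $\beta$.

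For the uniform bound on $v^\beta$, I would write, for every $\beta \in \sA$ and $(t,x,\mu) \in [0,T]\times\sX$,
$$
0 \;\le\; v^\beta(t,x,\mu) \;\le\; J(t,x,\mu,0;\beta) \;\le\; T M_\ell + M_g,
$$
where $M_\ell := \max_{(x,\mu) \in \sX} \ell(x,\mu,0)$ and $M_g := \max_{(x,\mu)\in\sX} g(x,\mu)$. The lower bound uses the nonnegativity of $\ell$ and $g$, while the upper bound uses that $\sX$ is finite, so $M_\ell$ and $M_g$ are finite constants independent of $\beta$. Setting $c_v := T M_\ell + M_g$ then gives the first claim.

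For the bound on $\alpha^*(\beta)$, I would invoke the representation $\alpha^*(\beta)(t,x,\mu) = \hat\alpha(x,\mu,\Delta_x v^\beta(t,\cdot,\mu))$ together with Lemma~\ref{lem:control-lip}, so that
$$
|\alpha^*(\beta)(t,x,\mu)|_2 \;\le\; |\hat\alpha(x,\mu,0)|_2 + c^*_a\, |\Delta_x v^\beta(t,\cdot,\mu)|_2.
$$
The previous step yields $|\Delta_x v^\beta(t,\cdot,\mu)|_2 \le 2\sqrt{d-1}\,c_v$, since each of the $d-1$ coordinates is a difference of two values bounded by $c_v$. For $\hat\alpha(x,\mu,0)$, which is the unique minimizer of $a \mapsto \ell(x,\mu,a)$ on $\R_+^{d-1}$, I would derive $\gamma\,|\hat\alpha(x,\mu,0)|_2^2 \le \ell(x,\mu,0)$ by applying Assumption~\ref{asm:cost} between $a^* := \hat\alpha(x,\mu,0)$ and $a = 0$, passing to the limit $\tau\downarrow 0$, and invoking the KKT identity $\langle \nabla_a \ell(x,\mu,a^*), a^*\rangle = 0$. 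Finiteness of $\sX$ then makes $M_0 := \max_{(x,\mu)\in\sX} |\hat\alpha(x,\mu,0)|_2$ finite, and $c_a := M_0 + 2 c^*_a \sqrt{d-1}\, c_v$ delivers the second bound.

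The proof is essentially mechanical; no serious obstacle is expected, since the discreteness of $\sX$ removes any regularity issues in $(x,\mu)$ and reduces every uniform bound to a maximum over a finite set. The only mildly delicate step is the a priori bound on $\hat\alpha(x,\mu,0)$, which nevertheless follows directly from strong convexity together with first-order optimality.
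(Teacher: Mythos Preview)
Your argument is correct and follows essentially the same route as the paper: use $\alpha\equiv 0$ as a competitor to bound $v^\beta$, then combine Lemma~\ref{lem:control-lip} with the bound $|\Delta_x v^\beta|_2\le 2\sqrt{d-1}\,c_v$ to control $\alpha^*(\beta)$. The only difference is that the paper handles $|\hat\alpha(x,\mu,0)|_2$ simply by noting it is a function on the finite set $\sX$ and hence bounded, whereas you extract an explicit bound from strong convexity. Your extra step is harmless but slightly over-engineered: the KKT identity $\langle \nabla_a\ell(x,\mu,a^*),a^*\rangle=0$ presupposes differentiability of $\ell$ in $a$, which Assumption~\ref{asm:cost} does not provide. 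If you want to keep the quantitative bound, you can avoid KKT entirely: apply Assumption~\ref{asm:cost} with $a'=0$, $a=a^*$, use $\ell(x,\mu,(1-\tau)a^*)\ge \ell(x,\mu,a^*)$ by optimality (since $(1-\tau)a^*\in\R_+^{d-1}$), divide by $\tau$, and let $\tau\downarrow 0$ to get $\gamma|a^*|_2^2\le \ell(x,\mu,0)-\ell(x,\mu,a^*)\le \ell(x,\mu,0)$ directly.
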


\begin{proof}
Since $\ell,g$ are assumed to be non-negative, 
for any $\beta\in\sA$ the value function $v^\beta$ of 
is also non-negative. 
Then, by choosing $\alpha\equiv0$ we obtain the following uniform upper bound, 
\begin{align*}
 0 &\leq \sup_{\beta\in\sA} \ \max_{(t,x,\mu)\in[0,T]\times\sX} 
 \ v^\beta(t,x,\mu) \leq 
 \sup_{\beta\in\sA} \ \max_{(t,x,\mu)\in[0,T]\times\sX} 
 \ J(t,x,\mu,0;\beta)
 \\
 &\le \max_{(x,\mu)\in\sX} \ (T\ell(x,\mu,0) + g(x,\mu)) =: c_v<\infty.   
\end{align*}
In view of Lemma~\ref{lem:control-lip},
$$
|\hat\alpha(x,\mu,\Delta_x v^\beta(t,\cdot,\mu))|_2
\le |\hat\alpha(x,\mu,0)|_2 +c^*_a \ |\Delta_x v^\beta(t,\cdot,\mu)|_2
\le |\hat\alpha(x,\mu,0)|_2 + 2 \sqrt{d-1}\  c^*_a c_v.
$$
Once again we use the fact that $\hat\alpha (\cdot,\cdot,0)$ as a
function on a finite set is bounded.  Consequently, the claimed
estimate holds with the constant $c_a$ defined by
\begin{equation*}
c_a:= \max_{(x,\mu)\in\sX} \hat\alpha(x,\mu,0) + 2\|\lambda^1\|_\infty c_v\sqrt{d-1}/\gamma<\infty.
\qedhere
\end{equation*}
\end{proof}

In view of the above estimate on the best response function,
we conclude that all MPEs must satisfy this point-wise estimate.
Hence, we may restrict the controls to 
the ones which are bounded by the constant $c_a$ 
constructed in the above lemma, i.e., we set
\begin{equation}
\label{eq:smallA}
\sA^*:= \bigl\{ \alpha \in \sA\ :\  \sup_{t \in [0,T]}\ |\alpha(t)|_2 \le c_a \bigr\},
\end{equation}
where we used the notation $\alpha(t)$ introduced in the notation subsection
of the Introduction.
We summarize these observations in the next corollary.
\begin{Cor}
\label{cor:H}
Let $c_a$ and $\sA^*$ be as above.  Then,
for any $\beta \in \sA$ and $(t,x,\mu)\in[0,T]\times\sX$,
the tagged player's value function $v^\beta$  satisfies
$$
    H(x,\mu,\Delta_x v^\beta(t,\cdot,\mu)) =
    \inf_{a\in\R_+^{d-1}, |a|_2 \le c_a} \ \Bigl\{\ell(x,\mu,a) + \sum_{y\neq x}(\lambda^0_y(x,\mu)+\lambda^1_y(x,\mu)a_y)\ 
    (\Delta_x v^\beta(t,\cdot,\mu))_y\Bigr\}.
$$
\end{Cor}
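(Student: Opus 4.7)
The plan is to read this identity off directly from Lemma~\ref{lem:constants}. Under Assumption~\ref{asm:cost}, the map
$$
a \ \longmapsto\ \ell(x,\mu,a) + \sum_{y\neq x}\bigl(\lambda^0_y(x,\mu)+\lambda^1_y(x,\mu)a_y\bigr)\, p_y
$$
is strongly convex on $\R_+^{d-1}$, so for each $(x,\mu,p)$ the infimum defining $H(x,\mu,p)$ is attained at the unique minimizer $\hat\alpha(x,\mu,p)\in\R_+^{d-1}$. Specialising to $p=\Delta_x v^\beta(t,\cdot,\mu)$ and invoking the definition \eqref{eq:alpha*}, this minimizer coincides with $\alpha^*(\beta)(t,x,\mu)$.

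Next, I would invoke the second bound in Lemma~\ref{lem:constants} to obtain $|\alpha^*(\beta)(t,x,\mu)|_2\le c_a$, so the unconstrained minimizer already lies in the closed set $B:=\{a\in\R_+^{d-1}:|a|_2\le c_a\}$. The infimum over the smaller set $B$ is trivially $\ge$ the infimum over $\R_+^{d-1}$, while the latter is realised at a point of $B$, hence the two infima are equal. This yields precisely the claimed identity.

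There is no real obstacle here: the corollary is simply a repackaging of Lemma~\ref{lem:constants} that will be convenient later, when one needs to treat $H(x,\mu,\Delta_x v^\beta(t,\cdot,\mu))$ as the value of a minimization over a compact set (e.g.\ to deduce Lipschitz dependence of $H$ on the gradient argument along the sequence $v^{(n)}$). The only point worth mentioning explicitly in a write-up is that the minimizer of the unconstrained problem is indeed in $\R_+^{d-1}$, which is built into the definition of $\hat\alpha$.
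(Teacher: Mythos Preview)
Your argument is correct and matches the paper's own reasoning: the paper does not give a separate proof of this corollary but simply presents it as a direct consequence of Lemma~\ref{lem:constants}, which is exactly what you have spelled out. Your additional remark that the $\lambda^0$-term is independent of $a$ (so the unconstrained minimizer of the full expression is indeed $\hat\alpha$) is the only point one might add, and you have it.
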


We emphasize that the constant $c_a$ depends on all of the given functions
but is otherwise independent of the untagged player's control $\beta$.
The next two estimates play a central role
in our subsequent analysis.

\begin{Lem}\label{lem:alpha-leq-value}
There exists a constant $c_0>0$ such that for 
any $\beta, \tilde\beta \in \sA$, 
$$
|\alpha^*(\beta)(t)- \alpha^*(\tilde  \beta)(t)|_2\leq 
c_0 \  |v^{\beta}(t)- v^{\tilde \beta}(t)|_2 ,
\qquad t\in[0,T].
$$
\end{Lem}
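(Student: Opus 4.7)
The plan is to combine the Lipschitz estimate on $\hat\alpha$ from Lemma~\ref{lem:control-lip} with an elementary estimate on the discrete difference operator $\Delta_x$, then sum over the finite state space $\sX$ to convert pointwise bounds into bounds in the $|\cdot|_2$ norm.

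First, I would fix $t\in[0,T]$ and $(x,\mu)\in\sX$ and recall that by \eqref{eq:alpha*}, $\alpha^*(\beta)(t,x,\mu) = \hat\alpha(x,\mu,\Delta_x v^\beta(t,\cdot,\mu))$ and likewise for $\tilde\beta$. Applying Lemma~\ref{lem:control-lip} with $p = \Delta_x v^\beta(t,\cdot,\mu)$ and $p' = \Delta_x v^{\tilde\beta}(t,\cdot,\mu)$ yields
\[
|\alpha^*(\beta)(t,x,\mu) - \alpha^*(\tilde\beta)(t,x,\mu)|_2 \le c_a^* \,|\Delta_x w(t,\cdot,\mu)|_2,
\]
where $w := v^\beta - v^{\tilde\beta}$ and the difference operator is applied to the function $y \mapsto w(t,y,\mu)$.

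Next, I would unfold the definition of $\Delta_x$: by construction, $|\Delta_x w(t,\cdot,\mu)|_2^2 = \sum_{y\neq x} |w(t,y,\mu) - w(t,x,\mu)|^2$. Using $(a-b)^2 \le 2a^2 + 2b^2$ gives the pointwise bound
\[
|\Delta_x w(t,\cdot,\mu)|_2^2 \le 2\sum_{y\neq x} w(t,y,\mu)^2 + 2(d-1)\, w(t,x,\mu)^2.
\]
Squaring the first display, summing over $(x,\mu)\in\sX$, and interchanging the order of summation in the first term (so that each $w(t,y,\mu)^2$ is counted $(d-1)$ times as $x$ ranges over $\cX\setminus\{y\}$) produces
\[
|\alpha^*(\beta)(t) - \alpha^*(\tilde\beta)(t)|_2^2 \le 4(d-1)(c_a^*)^2\, |w(t)|_2^2,
\]
which is exactly the claimed estimate with $c_0 := 2 c_a^* \sqrt{d-1}$. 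Taking square roots concludes the proof.

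There is no substantial obstacle: the argument is a routine composition of a Lipschitz nonlinearity with a bounded linear operator, combined with the fact that $\sX$ is finite so all sums are well-defined. The only point requiring mild care is tracking the combinatorial constant coming from $\Delta_x$ in terms of $d$, which I absorb into $c_0$. Note that the estimate is uniform in $\beta,\tilde\beta\in\sA$ since the Lipschitz constant $c_a^*$ from Lemma~\ref{lem:control-lip} does not depend on the controls.
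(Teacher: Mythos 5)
Your proof is correct and follows essentially the same route as the paper: apply the Lipschitz bound on $\hat\alpha$ from Lemma~\ref{lem:control-lip} pointwise, bound $|\Delta_x w|_2^2$ by $2\sum_{y\neq x}w(t,y,\mu)^2+2(d-1)w(t,x,\mu)^2$, and sum over $\sX$ to arrive at the constant $c_0=2c_a^*\sqrt{d-1}$, which matches the paper exactly.
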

\begin{proof}
To simplify the notation, we fix $\beta, \tilde\beta \in \sA$
and set $\alpha= \alpha^*(\beta)$,
$\tilde \alpha = \alpha^*(\tilde \beta)$,
$v=v^{\beta}$ and 
$\tilde v = v^{\tilde \beta}$.
Using Lemma \ref{lem:control-lip}, for $t\in[0,T]$, we directly estimate:
\begin{align*}
|\tilde \alpha(t)- \alpha(t)|_2^2 &= \sum_{(x,\mu)\in\sX}|\hat\alpha(x,\mu,\Delta_x \tilde v(t,\cdot,\mu))-\hat\alpha(x,\mu,\Delta_x v(t,\cdot,\mu))|_2^2 \\
&\leq (c^*_a)^2\ \sum_{(x,\mu)\in\sX}|\Delta_x \tilde v(t,\cdot,\mu)-\Delta_x v(t,\cdot,\mu)|_2^2\\
&\leq 2 (c^*_a)^2\ \sum_{(x,\mu)\in\sX} \Bigl(\sum_{y\neq x}|\tilde v(t,y,\mu)-v(t,y,\mu)|^2 + (d-1)|\tilde v(t,x,\mu)-v(t,x,\mu)|^2\Bigr)\\
&= 4(c^*_a)^2 (d-1)\  \sum_{(x,\mu)\in\sX} |\tilde v(t,x,\mu)-v(t,x,\mu)|^2=4(c^*_a)^2 (d-1)\ |\tilde v(t)-v(t)|_2^2.
\end{align*} 
Setting $c_0:=2c_a^*\sqrt{d-1}$ completes the proof
\end{proof}

\begin{Lem}\label{lem:value-leq-beta}
There exists a constant $c^*>0$ such that 
any $\beta, \tilde\beta \in \sA^*$,
$$
|v^{\beta}(t)- v^{\tilde \beta}(t)|^2_2
\leq c^*\ \int_t^T |\tilde\beta(s)-\beta(s)|_2^2\ \d s,
\qquad t\in[0,T].
$$
\end{Lem}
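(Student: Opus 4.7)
Let $w(t,x,\mu) := v^\beta(t,x,\mu) - v^{\tilde\beta}(t,x,\mu)$. The strategy is to derive a backward ODE inequality for the squared norm $f(t) := |w(t)|_2^2$ and close it with Grönwall. Observe that $w(T,\cdot,\cdot) \equiv 0$, and subtracting the two dynamic programming equations \eqref{eq:HJB-beta} for $v^\beta$ and $v^{\tilde\beta}$ gives
\begin{equation*}
-\frac{\d}{\d t} w(t,x,\mu) \;=\; \underbrace{\bigl[H(x,\mu,\Delta_x v^\beta(t,\cdot,\mu))-H(x,\mu,\Delta_x v^{\tilde\beta}(t,\cdot,\mu))\bigr]}_{=: I(t,x,\mu)} \;+\; \underbrace{\bigl[\sL^\beta_\mu v^\beta(t)-\sL^{\tilde\beta}_\mu v^{\tilde\beta}(t)\bigr](t,x,\mu)}_{=: II(t,x,\mu)}.
\end{equation*}

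The first step is to show that both $I$ and $II$ admit pointwise bounds in terms of $|w(t)|_2$ and $|\beta(t)-\tilde\beta(t)|_2$. For the Hamiltonian term, since $\beta,\tilde\beta\in\sA^*$ and using Corollary~\ref{cor:H} together with Lemma~\ref{lem:control-lip} and the boundedness of $\lambda^1$, $H$ is Lipschitz in $p$ on the relevant compact set; consequently $|I(t,x,\mu)|$ is bounded by a constant times $|\Delta_x v^\beta(t,\cdot,\mu)-\Delta_x v^{\tilde\beta}(t,\cdot,\mu)|_2$, which after summing over $(x,\mu)\in\sX$ yields $|I(t,\cdot,\cdot)|_2 \le C_1\,|w(t)|_2$ via the same computation used in the proof of Lemma~\ref{lem:alpha-leq-value}. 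For the operator term, I split
\begin{equation*}
II \;=\; \sL^\beta_\mu w(t) \;+\; \bigl(\sL^\beta_\mu - \sL^{\tilde\beta}_\mu\bigr) v^{\tilde\beta}(t).
\end{equation*}
Since $\beta\in\sA^*$, the coefficients of $\sL^\beta_\mu$ are uniformly bounded, so $|\sL^\beta_\mu w(t)|_2 \le C_2\,|w(t)|_2$. For the remaining piece, the definition of $\sL^\beta_\mu$ shows that $(\sL^\beta_\mu - \sL^{\tilde\beta}_\mu) v^{\tilde\beta}(t,x,\mu)$ is a finite sum of terms of the form $n^\mu_z \lambda^1_y(z,\mu+e_{x,z})\,[\beta_y-\tilde\beta_y](t,z,\mu+e_{x,z})\,[v^{\tilde\beta}(t,x,\mu+e_{y,z})-v^{\tilde\beta}(t,x,\mu)]$; using $\|v^{\tilde\beta}\|_\infty\le c_v$ from Lemma~\ref{lem:constants} and $\|\lambda^1\|_\infty<\infty$, each such term is bounded pointwise by a constant multiple of $|\beta(t)-\tilde\beta(t)|_1$, and summing and applying Cauchy--Schwarz on the finite set $\sX$ gives $|(\sL^\beta_\mu - \sL^{\tilde\beta}_\mu)v^{\tilde\beta}(t)|_2 \le C_3\,|\beta(t)-\tilde\beta(t)|_2$.

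Next I compute $\frac{\d}{\d t} f(t) = 2\sum_{(x,\mu)} w(t,x,\mu)\,\frac{\d}{\d t}w(t,x,\mu) = -2\sum_{(x,\mu)} w\,[I+II]$. Applying Cauchy--Schwarz on the finite set $\sX$ and the bounds above yields
\begin{equation*}
\Bigl|\frac{\d}{\d t} f(t)\Bigr| \;\le\; 2|w(t)|_2\bigl(|I(t)|_2+|II(t)|_2\bigr) \;\le\; C_4\,f(t) \;+\; C_5\,|w(t)|_2\,|\beta(t)-\tilde\beta(t)|_2,
\end{equation*}
and the Young-type inequality $2ab \le a^2 + b^2$ absorbs the mixed term into $C_6 f(t) + C_7 |\beta(t)-\tilde\beta(t)|_2^2$. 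Since $f(T)=0$, integrating backward gives
\begin{equation*}
f(t) \;\le\; \int_t^T \Bigl(C_6\,f(s) + C_7\,|\beta(s)-\tilde\beta(s)|_2^2\Bigr)\,\d s,
\end{equation*}
and Grönwall's inequality (backward) then produces $f(t) \le c^*\int_t^T |\beta(s)-\tilde\beta(s)|_2^2\,\d s$ with $c^* = C_7\,e^{C_6 T}$, which is the claim.

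The only delicate point I expect is the bookkeeping for the Hamiltonian-Lipschitz estimate: using the representation in Corollary~\ref{cor:H} one must verify that restricting the infimum to the compact ball $\{|a|_2\le c_a\}$ indeed yields a Lipschitz constant independent of $\beta,\tilde\beta\in\sA^*$. Everything else is a routine backward Grönwall argument adapted to the discrete state space, where the finiteness of $\sX$ makes all sums finite and the passage between $|\cdot|_1$ and $|\cdot|_2$ norms costs only a combinatorial constant that is absorbed into $c^*$.
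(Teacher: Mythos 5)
Your proposal is correct and follows essentially the same route as the paper: subtract the two dynamic programming equations, decompose the right-hand side into the Hamiltonian difference, the term $\sL^\beta_\mu w$, and the operator-difference term $(\sL^\beta_\mu-\sL^{\tilde\beta}_\mu)v^{\tilde\beta}$, bound the first two by $|w(t)|_2$ (via Corollary~\ref{cor:H} and boundedness of the rates) and the third by $|\beta(t)-\tilde\beta(t)|_2$, then close with Young and backward Gr\"onwall using $w(T)=0$. The ``delicate point'' you flag about the Hamiltonian is exactly what Corollary~\ref{cor:H} is designed to resolve, so nothing is missing.
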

\begin{proof}
We again fix $\beta, \tilde\beta \in \sA^*$ and set
$\alpha= \alpha^*(\beta)$,
$\tilde \alpha = \alpha^*(\tilde \beta)$,
$v=v^{\beta}$, 
$\tilde v = v^{\tilde \beta}$, 
$$
w := \tilde v -v, \quad \text{and}\quad 
m(t):=|w(t)|_2^2,\quad t\in[0,T].
$$

We proceed in several steps.
\vspace{5pt}

\noindent
\emph{Step 1.} For $t\in[0,T]$, we use the dynamic programming equation \eqref{eq:HJB-beta} to compute
\begin{equation}\label{eq:m(t)}
- \frac{\d}{\d t} m(t) = -\sum_{(x,\mu)\in\sX} 
\frac{\d}{\d t} |w(t,x,\mu)|^2
=-\sum_{(x,\mu)\in\sX} \Bigl(\frac{\d}{\d t} w(t,x,\mu)\Bigr) \, 
w(t,x,\mu)= \cI + \cJ + \cK,
\end{equation}
where, for $(t,x,\mu)\in[0,T]\times\sX$, 
\begin{align*}
\cI &:= \sum_{(x,\mu)\in\sX} (H(x,\mu,\Delta_x\tilde v(t,\cdot,\mu)) - H(x,\mu,\Delta_xv(t,\cdot,\mu))) \, w(t,x,\mu),\\
\cJ &:= \sum_{(x,\mu)\in\sX} (\sL_\mu^{\tilde \beta}-\sL_\mu^{\beta}) \tilde v(t)(t,x,\mu)\,w(t,x,\mu),\\
\cK& : = \sum_{(x,\mu)\in\sX} \sL_\mu^\beta w(t)(t,x,\mu) \, w(t,x,\mu).
\end{align*}
We continue by estimating $\cI,\cJ$, and $\cK$.
\vspace{5pt}

\noindent
\emph{Step 2.} We start with $\cI$.  For any $(t,x,\mu)\in[0,T]\times\sX$, 
by the Cauchy-Schwarz and Jensen inequalities, and Corollary~\ref{cor:H}, 
\begin{align*}
\sum_{(x,\mu)\in\sX}|H(x,\mu,\Delta_x\tilde v(t,\cdot,\mu)) 
&- H(x,\mu,\Delta_xv(t,\cdot,\mu))|^2 \\
&\leq\sum_{(x,\mu)\in\sX} \ \sup_{a\in\R_+^{d-1},\ 
|a|_2\leq c_a } \ \Big( \sum_{y\neq x} \lambda^1_y(x,\mu)a_y |(\Delta_xw(t,\cdot,\mu))_y| \Big)^2\\
&\leq(d-1)  c_a^2\|\lambda^1\|_\infty^2\ 
 \sum_{(x,\mu)\in\sX} \sum_{y\neq x} |(\Delta_xw(t,\cdot,\mu))_y|^2  \\
&\leq c_1^2 \sum_{(y,\mu)\in\sX} |w(t,y,\mu)|^2 = c_1^2 \ m(t),
\end{align*}
for some constant $c_1$. Hence,
$$
| \cI | \le 
\Big(\sum_{(x,\mu)\in\sX}|H(x,\mu,\Delta_x\tilde v(t,\cdot,\mu)) 
- H(x,\mu,\Delta_xv(t,\cdot,\mu))|^2 \Big)^{1/2} \
\sqrt{m(t)}
\le c_1\  m(t).
$$

Using Young's inequality repeatedly, we estimate $\cJ$ by
\begin{align*}
2 | \cJ | &\leq   \sum_{(x,\mu)\in\sX} [(\sL_\mu^{\tilde\beta}-\sL_\mu^\beta) \tilde v(t)(t,x,\mu)|^2
+ \ m(t) \\
&\leq  \sum_{(x,\mu)\in\sX} \Bigl(
\sum_{y\neq z} n^\mu_z 
\lambda^1_y(z,\mu+e_{xz})
(\tilde \beta_y-\beta_y)(t,z,\mu+e_{xz})(\tilde v(t,x,\mu+e_{yz})-\tilde v(t,x,\mu))\Bigr)^2
+ \ m(t)\\
&\leq  4 N^2d c_v^2\|\lambda^1\|_\infty^2 \sum_{(x,\mu)\in\sX} 
\sum_{y\neq z} [(\tilde \beta_y-\beta_y)(t,z,\mu+e_{xz})]^2 +  m(t)\\
&= 2c_2 |\tilde\beta(t)-\beta(t)|_2^2 + m(t),
\end{align*}
where $c_2=2N^2d c_v^2\|\lambda^1\|_\infty^2 $.

Since  $\beta,\tilde\beta\in\sA^*$, they are point-wise bounded by $c_a$, and consequently there exists $c_3>0$ such that
\begin{align*}
|\cK| &\leq\!\!\!\sum_{(x,\mu)\in\sX, y\neq z} \!\!\!\!
n^\mu_z [\lambda^0_y(z,\mu+e_{xz})+\lambda^1_y(z,\mu+e_{xz})\beta_y(t,z,\mu + e_{xz})]
|w(t,x,\mu+e_{yz})-w(t,x,\mu)| |w(t,x,\mu)|\\
&\leq c_3 m(t).   
\end{align*}

\noindent
\emph{Step 3.} Combining  the estimates
for $\cI,\cJ,\cK$ obtained in the previous step, we arrive at 
$$
-\frac{\d}{\d t} m(t) \leq \cI + \cJ + \cK  
\leq  c_4(m(t)+|\tilde\beta(t)-\beta(t)|_2^2)
$$
where $c_4= \max\{c_1+c_3+1/2,c_2\}$. 
By Gr\"{o}nwall's inequality, this implies that 
$$
0\leq m(t_0) \leq e^{c_4(t_1-t_0)}m(t_1)+\int_{t_0}^{t_1}e^{c_4(s-t_0)}|\tilde\beta(s)-\beta(s)|_2^2\,\d s,\quad 0\leq t_0\leq t_1\leq T.
$$
Note that $m(T)=0$. Hence, applying this with $t_0=t$ and $t_1=T$ we obtain,
\begin{equation*}
0\leq m(t) \leq  c^* \int_t^T |\tilde\beta(s)-\beta(s)|_2^2\,\d s,\qquad 0\leq t\leq T,
\end{equation*}
where $c^*:=\exp(c_4T)$.
\end{proof}

\subsection{Picard algorithm}
\label{ss:Picard}

In this subsection, we analyze the  pure
Picard iteration corresponding to $\we=0$.
In this case, given $\beta^{(0)}\in\sA^*$, 
the algorithm reduces to the following definitions:
$$
    v^{(n)} := v^{\beta^{(n-1)}},\quad 
    \alpha^{(n)} :=\alpha^*(\beta^{(n-1)}), 
    \quad \beta^{(n)} := \alpha^{(n)},\quad n\geq1.
$$

\noindent
\emph{Proof of Theorem~\ref{thm:convergence} with $\we=0$}.  Set
\begin{equation}
\label{eq:wm}
w^{(n)} := v^{(n+1)}-v^{(n)},\qquad 
m^{(n)}(t):= |w^{(n)}(t)|_2^2,\qquad t\in[0,T],\ n\geq1. 
\end{equation}
Since $\beta^{(0)} \in \sA^*$, by Corollary~\ref{cor:H}
we can conclude that all  
$\alpha^{(n)}, \beta^{(n)} \in \sA^*$.
Then, by Lemma~\ref{lem:value-leq-beta}, there exists  $c^*$ satisfying,
$$
m^{(n+1)} (t) \leq c^* \int_t^T |\beta^{(n+1)}(s)-\beta^{(n)}(s)|_2^2\,\d s,
\qquad t\in[0,T],\ n\geq1.
$$
In view of Lemma~\ref{lem:alpha-leq-value}, there is
$c_0$ such that
\begin{align*}
|\beta^{(n+1)}(t)-\beta^{(n)}(t)|_2^2& = |\alpha^*(\beta^{(n)})(t)-\alpha^*(\beta^{(n-1)})(t)|_2^2\\
&\leq c_0 |v^{\beta^{(n)}}(t)-v^{\beta^{(n-1)}}(t)|_2^2\\
&= c_0 |v^{(n+1)}(t)-v^{(n)}(t)|_2^2,
\qquad t\in[0,T],\ n \geq 1,
\end{align*}
Combining these results yields
\begin{equation}
\label{eq:est}
m^{(n+1)}(t)\leq c^* c_0 \int_t^T m^{(n)}(s)\,\d s,
\qquad t\in[0,T],\ n\geq 1.
\end{equation}

We now claim that 
\begin{equation}
\label{eq:induction}
m^{(n)}(t) \leq  \hat{c}\, \frac{( \hat{c}\, (T-t) )^n}{n!}, \qquad t\in[0,T],\ n\geq1,
\end{equation}
for some constant $\hat{c}$.  Indeed, since $\beta^{(n)} \in \sA^*$,
$|\beta^{(n)}(t)|_2 \le c_a$ for each $n\ge 0$, we have
$$
m^{(1)}(t) \le c^* \int_t^T |\beta^{(1)}(s) - \beta^{(0)}(s)|_2^2\ \d s
\le  c^* \int_t^T (2c_a)^2\ \d s = 4 c^* c_a^2 \ (T-t),\qquad t\in[0,T].
$$
We continue by proving \eqref{eq:induction} by induction with 
$\hat{c}=  \max\{ c^* c_0 \, ,\,   c^* c_a^2\}$.
Suppose that it holds for some $n\ge 1$. Then,  by
\eqref{eq:est},
$$
m^{n+1}(t) \leq c^* c_0 
\int_t^T m^{(n)}(s)\,\d s \leq c^* c_0 \hat{c} 
\int_t^T \frac{( \hat{c}\, (T-s) )^n}{n!}\,\d s 
=c^* c_0 \frac{(\hat c(T-t))^{n+1}}{(n+1)!}
\le \hat{c} \frac{(\hat{c}(T-t))^{n+1}}{(n+1)!}.
$$
The definition of $m(t)$ and \eqref{eq:induction}
directly imply that
$$
\sup_{t\in[0,T]}
\Bigl(\sum_{(x,\mu)\in\sX}|(v^{(n+1)}-v^{(n)})(t,x,\mu)|^2\Bigr)^{1/2}
= \sup_{t\in[0,T]} \big(m^{(n)}(t)\big)^{1/2}
\leq 
(\hat{c})^{1/2}  \left(\frac{(\hat{c}\,T )^n}{n!}\right)^{1/2} =: a_n,\quad \ n\geq1.
$$
Recall the Banach space $\cS^k$ from \eqref{eq:snorm}.
Since $\sum_n a_n<\infty$, it follows that $(v^{(n)})_{n\geq1}$ is a Cauchy sequence in $\cS^1$. 
Hence, there exists $\bar v\in \cS^1$ such that $v_n\to \bar v$ in $\cS^1$. Using Lemma~\ref{lem:control-lip}, this implies that $\alpha_n\to\bar \alpha$ in $\cS^{d-1}$, where
$$
\bar \alpha (t,x,\mu) := \hat \alpha(x,\mu,\Delta_x \bar v(t,\cdot,\mu)),\quad (t,x,\mu)\in[0,T]\times\sX.
$$
It remains to argue $\bar v$ satisfies \eqref{eq:N-NLL}. Clearly, $\bar v(T,\cdot) = \lim_{n\to\infty} v^{(n)}(T,\cdot)=g$. Next, we pass to the limit in the following equation
$$
 -\frac{\d}{\d t} v^{(n)}(t,x,\mu) = H(x,\mu,\Delta_x v^{(n)}(t,\cdot,\mu)) + \sL^{\alpha^{(n-1)}}_\mu v^{(n)}(t)(t,x,\mu) =: F(x,\mu,\alpha^{(n-1)}(t),v^{(n)}(t)).
$$
For any compactly supported smooth test function $\phi\in C^\infty((0,T))$, integration by parts and the dominated convergence theorem imply, for any $(x,\mu)\in\sX$,  
\begin{align*}
\lim_{n\to\infty} \int_0^T \frac{\d}{\d t} v^{(n)}(t,x,\mu)\phi(t)\,\d t 
= - \lim_{n\to\infty}  \int_0^T v^{(n)}(t,x,\mu)\phi'(t)\,\d t 
= - \int_0^T \bar v(t,x,\mu)\phi'(t)\,\d t.  
\end{align*}
Additionally,
\begin{align*}
\lim_{n\to\infty}  \int_0^T \frac{\d}{\d t} v^{(n)}(t,x,\mu)\phi(t)\,\d t &= -
\lim_{n\to\infty} \int_0^T F(x,\mu,\alpha^{(n-1)}(t),v^{(n)}(t))\phi(t)\,\d t \\
&= \int_0^T F(x,\mu,\bar \alpha(t),\bar v(t))\phi(t)\,\d t.
\end{align*}
Hence, $-F(x,\mu,\bar \alpha(t),\bar v(t))$ is the weak (in the sense of distributions) derivative of $\bar v(t,x,\mu)$ on $(0,T)$. But the former is continuous in $t$, so that it is the strong derivative of $\bar v(t,x,\mu)$ on $(0,T)$. This shows that $\bar v$ indeed satisfies the NLL equation \eqref{eq:N-NLL}, which has a unique classical solution implying $\bar v=v$.
\qed

\subsection{Weighted Picard algorithm}
\label{ss:LPicard}

We fix $0<\we<1$  and omit the dependence on $\we$ in our notation. 
Starting with an arbitrary $\beta^{(0)}\in\sA^*$,
the defining equations are given by
$$
v^{(n)} = v^{\beta^{(n-1)}},\quad \alpha^{(n)}=\alpha^*(\beta^{(n-1)}), \quad \beta^{(n)} = \we\beta^{(n-1)}+(1-\we)\alpha^{(n)},\quad n\geq1.
$$
As all best responses are in the convex set $\sA^*$, we conclude that
$\beta^{(n)}, \alpha^{(n)}$ are also in $\sA^*$.  In particular, they are
bounded by the constant $c_a$ of Lemma~\ref{lem:constants}.
\vspace{5pt}

\begin{proof}[Proof of Theorem~\ref{thm:convergence} 
with $\we \in (0,1)$.]
Let $w^{(n)}, m^{(n)}(t)$ be as in \eqref{eq:wm} and set
$$
\gamma^{(n)}:=\alpha^{(n+1)}-\beta^{(n)},\quad 
\Gamma^{(n)}(t) :=|\gamma^{(n)}(t)|_2^2,
\qquad t\in[0,T],\ n\geq 0.
$$

\noindent
\emph{Step 1.} In this step,
we derive recursions for $(m^{(n)},\Gamma^{(n)})$ which we later solve in the next steps. 
We first observe that
$$
\beta^{(n+1)}-\beta^{(n)} = \we\beta^{(n)} +(1-\we)\alpha^{(n+1)} - \beta^{(n)} = (1-\we) \gamma^{(n)}.
$$
Hence,  by Lemma~\ref{lem:value-leq-beta},
\begin{equation}\label{eq:mn-recursion}
m^{(n+1)} (t) \leq c^* \int_t^T |\beta^{(n+1)}(s)-\beta^{(n)}(s)|_2^2\,\d s
=c^*(1-\we)\int_t^T\Gamma^{(n)}(s)\,\d s,\quad t\in[0,T],\ n\geq0.
\end{equation}
In view of the updating rule for $\beta^{(n)}$,
$$
\gamma^{(n)} = \alpha^{(n+1)}-\we\beta^{(n-1)}-(1-\we)\alpha^{(n)} 
= (\alpha^{(n+1)}-\alpha^{(n)}) + \we\gamma^{(n-1)},\quad n\geq1.
$$
We use the above with Lemma~\ref{lem:alpha-leq-value} 
to obtain the following recursion for $\Gamma^{(n)}$ with $\delta>0$ 
to be chosen in the next step,
\begin{align}
\nonumber
\Gamma^{(n)}(t) &\leq  |\alpha^{(n+1)}(t)-\alpha^{(n)}(t)|_2^2 + 
2 \we | (\alpha^{(n+1)}(t)-\alpha^{(n)}(t)) \gamma^{(n-1)}(t)|_2^2
+ \we^2 \, \Gamma^{(n-1)}(t)\\
\nonumber
&\le (1+\delta^{-1}) |\alpha^{(n+1)}(t)-\alpha^{(n)}(t)|_2^2 + (1+\delta)\we^2 \,\Gamma^{(n-1)}(t)\\
\label{eq:Gamma-recursion}
&\leq c_0(1+\delta^{-1}) m^{(n)}(t) + (1+\delta)\we^2 \,\Gamma^{(n-1)}(t),
\qquad n\geq1.
\end{align}

\noindent
\emph{Step 2.} Set  $\we_\delta:=(1+\delta)\we^2$, and choose
$$
\delta=\delta_\we= \frac{1-\we^2}{2\we^2} 
\qquad \Rightarrow \qquad
\we_\delta= 1- \frac12 (1-\we^2) <1.
$$
As $|\alpha^{(n)}(t)|_2,  |\beta^{(n)}(t)|_2 \le c_a$, we have $\Gamma^{(n)} \le 4 c_a^2$.
We claim that 
$$
\Gamma^{(0)}(t) \le 4c_a^2, \quad \text{and}\quad
\Gamma^{(n)}(t)\leq c_0(1+\delta_\we^{-1}) \ \sum_{k=0}^{n-1} 
\we_\delta^k \, m^{(n-k)}(t) 
+ 4 c_a^2  \we_\delta^{n},\quad t\in[0,T],\ n\geq1.
$$
Indeed, for $n=1$ the claimed estimate is exactly \eqref{eq:Gamma-recursion}. 
To complete the proof
by induction, we assume that it holds
for some $n\geq1$.  We use this assumed inequality at $n$
together with \eqref{eq:Gamma-recursion} at $n+1$.  The result is
\begin{align*}
\Gamma^{(n+1)}(t)&\leq 
c_0(1+\delta_\we^{-1}) m^{(n+1)}(t) + \we_\delta \,\Gamma^{(n)}(t)\\
& \le c_0(1+\delta_\we^{-1}) m^{(n+1)}(t) 
+ \we_\delta \Bigl(c_0(1+\delta_\we^{-1}) \ \sum_{k=0}^{n-1} \we_\delta^{k} \, m^{(n-k)}(t) 
+ 4 c_a^2 \we_\delta^n\Bigr)\\
&= c_0(1+\delta_\we^{-1}) \ \sum_{k=0}^{n} \we_\delta^k \, m^{(n+1-k)}(t) + 4c_a^2 \we_\delta^{n+1}.
\end{align*}

\noindent
\emph{Step 3.}  For $ t\in[0,T],\ n\geq1$,
we use  \eqref{eq:mn-recursion} and the recursion derived in Step 2 to estimate,
$$
m^{(n+1)}(t) \leq  
c^*(1-\we)\int_t^T \Gamma^{(n)}(s)\,\d s \leq 
c^* (1-\we)\left[ c_0 (1+\delta_\we^{-1})\sum_{k=0}^{n-1} \we_\delta^{k}\int_t^T m^{(n-k)}(s)\,\d s 
+ 4 c_a^2\we_\delta^n(T-t) \right].
$$
Hence, the following recursion holds for $m^{(n)}$
with the constant $\cld:= c^*(1-\we)\max \{ c_0 (1+\delta_\we^{-1}), 4c_a^2 \}$,
\begin{equation}
\label{eq:m-recursion}
    m^{(n+1)}(t) \leq \cld\ \left[\,\sum_{k=0}^{n-1} \we_\delta^k \int_t^T m^{(n-k)}(s)\,\d s 
    +  \we_\delta^n (T-t)\,\right],\qquad t\in[0,T],\ n\geq1.
\end{equation}
By \eqref{eq:mn-recursion},
$$
m^{(1)}(t) \le c^* (1-\we) \int_t^T \Gamma^{(0)}(s) \d s
\le  c^*(1-\we) 4c_a^2  (T-t)  \le \cld  (T-t).
$$

\noindent
\emph{Step 4.}  Set
$$
    M^{(n)}(t) := \we_\delta^n \sum_{k=1}^n\frac{1}{k!}\binom{n-1}{k-1}
    \left(\frac{\cld(T-t)}{\we_\delta}\right)^k,\quad t\in[0,T],\ n\geq1.
$$
We claim that  $m^{(n)} \leq M^{(n)}$ for every $n \ge 1$.
As  $M^{(1)}(t) = \cld (T-t)$,
and  the inequality $m^{(1)}(t) \leq  M^{(1)}(t)$
is proved above.
Assume the statement holds for all $k\leq n$ for some $n\geq1$. 
Then, for any $k\leq n$,
\begin{align*}
    \int_t^T m^{(n-k)}(s)\,\d s &\leq \int_t^T M^{(n-k)}(s)\,\d s
    = \we_\delta^{n-k} \sum_{\ell=1}^{n-k}\frac{1}{\ell!}\binom{n-k-1}{\ell-1}\int_t^T\left(\frac{\cld (T-s)}{\we_\delta}\right)^\ell\,\d s\\
    &=\we_\delta^{n-k} \sum_{\ell=1}^{n-k}\frac{1}{(\ell+1)!}\binom{n-k-1}{\ell-1}\left( \frac{\cld(T-t)}{\we_\delta}\right)^\ell\,(T-t).
\end{align*}
This leads to
\begin{align*}
    m^{(n+1)}(t) &\leq \cld \sum_{k=0}^{n-1}  \we_\delta^k\int_t^Tm^{(n-k)}(s)\,\d s 
    + \cld \we_\delta^n\,(T-t) \\
    &\leq \cld \sum_{k=0}^{n-1}  \we_\delta^k\left(\we_\delta^{n-k} \sum_{\ell=1}^{n-k}\frac{1}{(\ell+1)!}\binom{n-k-1}{\ell-1}
    \left( \frac{\cld (T-t)}{\we_\delta}\right)^\ell\right) \, (T-t)+ \cld \we_\delta^n\,(T-t)\\
    &=\we_\delta^{n+1}\sum_{\ell=1}^n \frac{1}{(\ell+1)!}\left(\frac{\cld (T-t)}{\we_\delta}\right)^{\ell+1}\  
    \sum_{k=0}^{n-\ell} \binom{n-k-1}{\ell-1} + \cld \we_\delta^n\,(T-t) \\
    &=\we_\delta^{n+1}\sum_{\ell=1}^n \frac{1}{(\ell+1)!}\left(\frac{\cld (T-t)}{\we_\delta}\right)^{\ell+1} \binom{n}{\ell} 
    + \cld  \we_\delta^n\,(T-t) \\
    &=\we_\delta^{n+1} \sum_{\ell=1}^{n+1}\frac{1}{\ell!}\binom{n}{\ell-1}
    \left(\frac{\cld (T-t)}{\we_\delta}\right)^\ell.
\end{align*}
In the above,
the first inequality follows from Step 3, the fourth line utilizes the identity 
$$
\sum_{k=0}^{n-\ell} \binom{n-k-1}{\ell-1} = \sum_{k=\ell-1}^{n-1}\binom{k}{\ell-1}=\binom{n}{\ell}
$$
and we make an index shift in the last line.
\vspace{5pt}

\noindent
\emph{Step 5.} We continue by showing that, for any $\we \in (0,1)$,
$(v^{(n)})_{n\geq1}$ is a Cauchy sequence in the complete Banach space
$\cS=\cC([0,T]\times\sX;\R)$ with norm given by \eqref{eq:snorm}.
We first observe that, since
$$
\binom{n-1}{k-1}\leq \binom{n}{k}\qquad \text{for}\ 1\leq k\leq n,
$$
$$
M^{(n)}(t) \leq \we_\delta^n\ L^{(n)}(-\cld (T-t)/\we_\delta),\quad \text{where}\quad 
L^{(n)}(x)=\sum_{k=0}^n \binom{n}{k}\frac{(-1)^k}{k!} x^k 
$$
is the $n$-th Laguerre polynomial.  Since the Laguerre polynomials 
are decreasing on $(-\infty,0]$,
$$
\|v^{(n+1)}-v^{(n)}\|_\cS =\sup_{t \in [0,T]}\big(m^{(n)}(t)\big)^{1/2} 
\le \bigl(M^{(n)}(0)\bigr)^{1/2}  \le
\we_\delta^{n/2}\ \big(L^{(n)}(-T_\delta )\bigr)^{1/2},
$$
where $T_\delta := \cld T/\we_\delta$.
Then, by the Cauchy-Schwarz inequality,
$$
\Big(\sum_{n\geq1}\|v^{(n+1)}-v^{(n)}\|_\cS\Big)^2
\le   \Big(\sum_{n\geq1}  \we_\delta^{n/2}\Big)\  \sum_{n\geq1}   \we_\delta^{n/2}\ L^{(n)}(-T_\delta)
=: c_\delta \ g( \we_\delta^{1/2}, -T_\delta),
$$
where $c_\delta:= \sum_{n\geq1}  \we_\delta^{n/2}$, 
and $g(x,t):= \sum_{n\geq1}   t^n L^{(n)}(x)$ is characteristic function
of the Laguerre polynomials.  As $\delta_\we<1$, $c_\delta<\infty$.
Also, it is known that 
$$
g(x,t)= (1-t)^{1/2}\ e^{- \tfrac{xt}{1-t}},
$$
and consequently, $g( \we_\delta^{1/2}, -T_\delta) <\infty$.
Hence, $\sum_{n\geq1} \|v^{(n+1)}-v^{(n)}\|_\cS <\infty$.
This implies that $(v^{(n)})_{n\geq1}$ is a Cauchy sequence in $\cS$. 
We complete the proof as in the Picard iteration case.
\end{proof}

\subsection{Convergence rates}
\label{ss:rate}

\begin{proof}[Proof of Proposition \ref{prop:rate}]
Fix $\we\in(0,1)$.
We use the inequality $m^{(n)} \le M^{(n)}$ and the
definitions to obtain
$$
\|v^{(n)}-v\|_\cS \le \sum_{k=0}^\infty \|v^{(n+k+1)}-v^{(n+k)}\|_\cS
= \sum_{k=0}^\infty \big(m^{(n+k)}(0)\big)^{1/2} \le \sum_{k=0}^\infty \big(M^{(n+k)}(0)\big)^{1/2}.
$$
Recall that with   $T_\delta = \cld T/\we_\delta$,
$$
M^{(n)}(0) = \we_\delta^n \sum_{k=1}^n\frac{1}{k!}\binom{n-1}{k-1} T_\delta^k
$$
Since $\we_\delta <1$,
there is $k_\we$ such that $k! \ge (T_\delta)^k (1-\we_\delta)^{-k}$ for every $k \ge k_\we$. Hence,
\begin{align*}
M^{(n)}(0) & \le \we_\delta^n \ \left[
\sum_{k=1}^{k_\we} \frac{1}{k!}\binom{n-1}{k-1} T_\delta^k
+\sum_{k=k_\we+1}^n \binom{n-1}{k-1} (1-\we_\delta)^{-k}\right]\\
&\le \we_\delta^n \left[ 
\sum_{k=1}^{k_\we} \frac{1}{k!}\binom{n-1}{k-1} T_\delta^k
+\sum_{k=0}^n \binom{n}{k} (1-\we_\delta)^{-k}\right]
= \we_\delta^n \left[\hat{c}_\we+(2-\we_\delta)^n\right],
\end{align*}
where $\hat{c}_\we:=\sum_{k=1}^{k_\we} \frac{1}{k!}\binom{n-1}{k-1} T_\delta^k$.
Set $\gamma_\we :=  \we_\delta (2-\we_\delta)$, so that 
$\gamma_\we=1-(1-\we_\delta)^2<1$. As $\gamma_\we>\we_\delta$,
$$
M^{(n)}(0) \le \gamma_\we^n(\hat{c}_\we+1) ,
\qquad \Rightarrow \qquad
 \sum_{k=0}^\infty \big(M^{(n+k)}(0)\big)^{1/2} \le  \gamma_\we^n(\hat{c}_\we+1) 
 \sum_{k=0}^\infty \gamma_\we^k =: c_\we \gamma_\we^n.
$$

The case $\we=0$ is simpler
and follows essentially from the above proof, \emph{mutatis mutandis}.
\end{proof}

\section{Error analysis}
\label{sec:errors}
In the above convergence analysis, we assume that the optimal
response function $\alpha^*(\beta)$ is calculated perfectly, without any error. However, in practice $\alpha^*(\beta)$ is approximated using a numerical method, as we discuss in the next section. 
In this section, we investigate the robustness of the algorithm from the theoretical viewpoint by introducing an error in the best response calculation and investigate its propagation through the iterations.  Namely, 
we fix $0\le\rho<1$ and let $(v^{(n)},\alpha^{(n)},\beta^{(n)})_{n\geq1}$
be the sequence analyzed in subsection~\ref{ss:LPicard}. We use the  initialization 
$\beta^{(0)}= \hat \beta^{(0)} \in \sA^*$ 
and define the sequence $(\hat{v}^{(n)},\hat{\alpha}^{(n)},\hat{\beta}^{(n)})_{n\geq1}$ 
corrupted with error by
\begin{equation}\label{eq:error-def}
\hat{v}^{(n)}:=v^{\hat{\beta}^{(n-1)}}, \quad 
\hat{\alpha}^{(n)}:=\alpha^*(\hat{\beta}^{(n-1)}),\quad 
\hat{\beta}^{(n)}=\we \hat{\beta}^{(n-1)}+(1-\we)
\bigl(\hat{\alpha}^{(n)}+ \varepsilon^{(n)}\bigr),\quad n\ge 1,
\end{equation}
where $\varepsilon^{(n)}:[0,T]\times\sX\mapsto\R_+^{d-1}$ is the error introduced at the $n$-th iteration. 
Set
$$
q^{(n)}(t):=|\hat{v}^{(n)}(t)-v^{(n)}(t)|_2^2,\qquad 
\gamma^{(n)}(t):=|\hat{\beta}^{(n)}(t)-\beta^{(n)}(t)|^2_2,\qquad t\in[0,T], \ n\geq1,
$$
where $|\cdot|_2$ is an in \eqref{eq:norm}.
Recall the norm  $\|\cdot\|_\cS$ defined in \eqref{eq:snorm}, 
the constant $c^*$ of Lemma \ref{lem:value-leq-beta} and define
\begin{align*}
\Gamma^{(n-1)}(t) &:= \hcr\ \sum_{k=1}^{n-1}\|\varepsilon^{(n-k)}\|_{\cS}^2 \sum_{j=1}^k \binom{k-1}{j-1} \we^{k-j} \frac{c_{*,\rho}^{j-1}(T-t)^{j-1}}{(j-1)!}\ ,\\
Q^{(n)}(t) &:=  \sum_{k=1}^{n-1} \|\varepsilon^{(n-k)}\|_{\cS}^2\  \sum_{j=1}^k\binom{k-1}{j-1}
\we^{k-j}\ \frac{c_{*,\rho}^{j}(T-t)^{j}}{j!}\, ,
\end{align*}
where $c_{*,\rho}:= c^* \hcr$, $\hcr:=2(1-\we)^2 (c_0^2\vee 1)$ 
and $c_0$ is defined in Lemma \ref{lem:alpha-leq-value}.

\begin{Thm}
\label{th:cv-error}
For $\we \in (0,1)$, 
\begin{equation}\label{eq:error-estimate-claim-i}
\gamma^{(n-1)}(t)\leq \Gamma^{(n-1)}(t),\qquad q^{(n)}(t)\leq Q^{(n)}(t),
\qquad t\in[0,T],\ n\geq2.
\end{equation}
In particular, there exists a constant $\tilde c>0$ independent of $n$ such that 
\begin{equation}\label{eq:error-estimate-claim-ii}
\sup_{n\geq1} \ \big(\|\gamma^{(n)}\|_\infty+\|q^{(n)}\|_\infty\big)\leq \tilde c\  \sup_{n\geq1} \|\varepsilon^{(n)}\|_\cS^2.    
\end{equation}
\end{Thm}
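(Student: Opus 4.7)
The plan is to prove the pointwise bounds (i) by strong induction on $n$, and then obtain (ii) by summing the explicit expressions for $\Gamma^{(n-1)}$ and $Q^{(n)}$. Two complementary pointwise recursions drive the induction. Since $\hat v^{(n)} = v^{\hat\beta^{(n-1)}}$ and $v^{(n)} = v^{\beta^{(n-1)}}$ with $\hat\beta^{(n-1)}, \beta^{(n-1)} \in \sA^*$ (the set $\sA^*$ being convex), Lemma~\ref{lem:value-leq-beta} gives directly
$$
q^{(n)}(t) \leq c^* \int_t^T \gamma^{(n-1)}(s)\, \d s, \qquad n \geq 1.
$$
From the identity
$$
\hat\beta^{(n)} - \beta^{(n)} = \rho\,(\hat\beta^{(n-1)} - \beta^{(n-1)}) + (1-\rho)\bigl[(\hat\alpha^{(n)} - \alpha^{(n)}) + \varepsilon^{(n)}\bigr],
$$
I will apply a weighted Young inequality with parameter tuned so that the coefficient of $|\hat\beta^{(n-1)} - \beta^{(n-1)}|_2^2$ is exactly $\rho$, and invoke Lemma~\ref{lem:alpha-leq-value} to bound $|\hat\alpha^{(n)} - \alpha^{(n)}|_2^2 \leq c_0^2\, q^{(n)}(t)$. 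This produces a pointwise recursion
$$
\gamma^{(n)}(t) \leq \rho\, \gamma^{(n-1)}(t) + \hcr\bigl(q^{(n)}(t) + \|\varepsilon^{(n)}\|_\cS^2\bigr).
$$

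Next, I will verify that the explicit dominators $(\Gamma^{(n-1)}, Q^{(n)})$ in the statement satisfy the matching \emph{equalities}
$$
Q^{(n)}(t) = c^* \int_t^T \Gamma^{(n-1)}(s)\, \d s, \qquad \Gamma^{(n)}(t) = \rho\,\Gamma^{(n-1)}(t) + \hcr\bigl(Q^{(n)}(t) + \|\varepsilon^{(n)}\|_\cS^2\bigr).
$$
The first identity reduces to $\int_t^T (T-s)^{j-1}/(j-1)!\, \d s = (T-t)^j/j!$ together with the scaling relation $c^* c_{*,\rho}^{j-1} = c_{*,\rho}^{j}/\hcr$. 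For the second, I expand $\rho\,\Gamma^{(n-1)}$, shift the outer index $k \mapsto k+1$, and invoke Pascal's rule $\binom{k-1}{j-1} = \binom{k-2}{j-1} + \binom{k-2}{j-2}$ to match the combinatorial structure of $\Gamma^{(n)}-\hcr(Q^{(n)} + \|\varepsilon^{(n)}\|_\cS^2)$. The base case $n=1$ is immediate because $\hat\beta^{(0)} = \beta^{(0)}$ forces $\hat\alpha^{(1)} = \alpha^{(1)}$, hence $q^{(1)} \equiv 0 \equiv Q^{(1)}$, while $\gamma^{(1)}(t) = (1-\rho)^2 |\varepsilon^{(1)}(t)|_2^2 \leq \hcr\,\|\varepsilon^{(1)}\|_\cS^2 = \Gamma^{(1)}(t)$. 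The inductive step is then automatic: combining the inequalities for $(\gamma,q)$ with the equalities for $(\Gamma, Q)$ propagates $\gamma^{(n-1)} \leq \Gamma^{(n-1)}$ and $q^{(n)} \leq Q^{(n)}$ to the next index.

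For (ii), set $E := \sup_n \|\varepsilon^{(n)}\|_\cS^2$ and factor $E$ out of both dominators. Switching the order of summation in $\Gamma^{(n-1)}(t)$ yields
$$
\Gamma^{(n-1)}(t) \leq \hcr\, E \sum_{j \geq 1} \frac{(c_{*,\rho}(T-t))^{j-1}}{(j-1)!} \sum_{k \geq j} \binom{k-1}{j-1}\rho^{k-j},
$$
and the negative binomial identity $\sum_{k \geq j}\binom{k-1}{j-1}\rho^{k-j} = (1-\rho)^{-j}$ collapses the right-hand side to $\hcr\, E (1-\rho)^{-1} \exp\!\bigl(c_{*,\rho} T/(1-\rho)\bigr)$, a bound independent of $n$. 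An identical computation dominates $Q^{(n)}$, and combining the two yields \eqref{eq:error-estimate-claim-ii} with an explicit $\tilde c$ depending only on $c^*, c_0, \rho$, and $T$.

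The main technical hurdle I anticipate lies in the pointwise recursion for $\gamma^{(n)}$: securing the precise coefficient $\rho$ in front of $\gamma^{(n-1)}$ while extracting the stated $(1-\rho)^2$ factor inside $\hcr$ requires exploiting the convex-combination structure $\hat\beta^{(n)} = \rho\hat\beta^{(n-1)}+(1-\rho)(\hat\alpha^{(n)}+\varepsilon^{(n)})$ rather than the raw triangle inequality, and selecting the Young parameter so that the cross term is absorbed into $\rho\gamma^{(n-1)}$. Once this recursion is established with the stated constants, the remainder of the argument is combinatorial bookkeeping.
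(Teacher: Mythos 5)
Your proposal is correct and, for the pointwise bounds \eqref{eq:error-estimate-claim-i}, follows essentially the same route as the paper: the same two inequalities from Lemma~\ref{lem:value-leq-beta} and Lemma~\ref{lem:alpha-leq-value} yield the coupled recursions $q^{(n)}(t)\le c^*\int_t^T\gamma^{(n-1)}(s)\,\d s$ and $\gamma^{(n)}(t)\le \rho\gamma^{(n-1)}(t)+\hcr(q^{(n)}(t)+\|\varepsilon^{(n)}\|_\cS^2)$, and the induction via Pascal's rule is the identical computation, merely reorganized — you verify that $(\Gamma^{(n)},Q^{(n)})$ satisfy the recursions as exact identities, whereas the paper substitutes the induction hypothesis and regroups the resulting triple sum; both reduce to $\binom{k-1}{j}+\binom{k-1}{j-1}=\binom{k}{j}$ and the same index shifts. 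Where you genuinely diverge is in \eqref{eq:error-estimate-claim-ii}: the paper invokes the Laguerre-polynomial generating function already used in the proof of Theorem~\ref{thm:convergence}, while you exchange the order of summation and use the negative binomial identity $\sum_{k\ge j}\binom{k-1}{j-1}\rho^{k-j}=(1-\rho)^{-j}$ to collapse the dominators to $\tfrac{\hcr}{1-\rho}\,e^{c_{*,\rho}T/(1-\rho)}\,\sup_n\|\varepsilon^{(n)}\|_\cS^2$; this is more elementary, self-contained, and gives an explicit $\tilde c$, so it is arguably preferable. One caveat: your stated aim of keeping the coefficient $\rho$ in front of $\gamma^{(n-1)}$ \emph{and} extracting a factor $(1-\rho)^2$ inside $\hcr$ is not achievable — Jensen (or the tuned Young inequality) on the convex combination gives $2(1-\rho)(c_0^2\vee 1)$, not $2(1-\rho)^2(c_0^2\vee 1)$. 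This is a discrepancy internal to the paper (the displayed definition of $\hcr$ before the theorem carries an extra power of $(1-\we)$ relative to the one used in the proof); your argument goes through verbatim with $\hcr=2(1-\rho)(c_0^2\vee 1)$, and you should state the constant that way rather than chase the squared factor.
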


The analysis of the pure Picard iteration can also be carried
out by following the proof below.  We provide the statement for $\we=0$
at the end of this section.  Additionally, the above error
estimate implies that when the errors are uniformly small,
then $\hat \beta^{(n)}$, for sufficiently large $n$, is an
approximate Nash equilibrium.  We state and prove this 
approximation result after the proof.

\begin{proof}
As $\hat{v}^{(n)}$ and $v^{(n)}$ solve the dynamic 
programming equation \eqref{eq:HJB-beta} with controls 
$\beta=\hat{\beta}^{(n-1)}$ and $\beta=\beta^{(n-1)}$, respectively, we see that
\begin{equation}\label{eq:error-q-leq-gamma}
q^{(n)}(t)\leq c^*\int_t^T |\hat{\beta}^{(n-1)}(s)-\beta^{(n-1)}(s)|^2_2 \, \d s=c^*\int_t^T \gamma^{(n-1)}(s)\, \d s,\quad n\geq 1,
\end{equation}
where $c^*$ is the constant from Lemma~\ref{lem:value-leq-beta}. By \eqref{eq:error-def}, for $t\in[0,T]$ and $n\geq1$,
$$
\hat{\beta}^{(n)}(t)-\beta^{(n)}(t)= \we(\hat{\beta}^{(n-1)}(t)-\beta^{(n-1)}(t))+(1-\we)(\hat{\alpha}^{(n)}(t)-\alpha^{(n)}(t))+(1-\we)\varepsilon^{(n)}(t).
$$
By Jensen's inequality, we may estimate
\begin{align*}
\gamma^{(n)}(t) &= \bigl|\we(\hat{\beta}^{(n-1)}(t)-\beta^{(n-1)}(t))+(1-\we)(\hat{\alpha}^{(n)}(t)-\alpha^{(n)}(t)+\varepsilon^{(n)}(t))\bigr|_2^2\\
&\leq \rho \gamma^{(n-1)}(t) + (1-\rho) |\hat{\alpha}^{(n)}(t)-\alpha^{(n)}(t)+\varepsilon^{(n)}(t)|_2^2\\
&\leq \rho \gamma^{(n-1)}(t) + 2(1-\rho)|\hat{\alpha}^{(n)}(t)-\alpha^{(n)}(t)|_2^2 + 2(1-\rho)|\varepsilon^{(n)}(t)|_2^2,
\end{align*}
where we used the inequality
$$
|\hat\alpha^{(n)}(t)-\alpha^{(n)}(t)+\varepsilon^{(n)}(t)|^2_2 \leq 2|\hat\alpha^{(n)}(t)-\alpha^{(n)}(t)|_2^2+2|\varepsilon^{(n)}(t)|_2^2.
$$
By Lemma~\ref{lem:alpha-leq-value},
$\alpha^{(n)}=\alpha^*(\beta^{(n-1)})$,
and the definitions  $\hat{v}^{(n)}=v^{\hat{\beta}^{(n-1)}}$, $v^{(n)}=v^{\beta^{(n-1)}}$,
$$
|\hat\alpha^{(n)}(t)-\alpha^{(n)}(t)|_2^2 =|\alpha^*(\hat\beta^{(n-1)})(t)-\alpha^*(\beta^{(n-1)})(t)|_2^2 
\le c_0^2 | \hat{v}^{(n)}(t)- v^{(n)}(t)|_2^2 
= c_0^2 q^{(n)}(t).
$$
Hence,
\begin{equation}\label{eq:error-gamma-weighted}
\gamma^{(n)}(t)\leq \rho\gamma^{(n-1)}(t) + \hcr (q^{(n)}(t) + |\varepsilon^{(n)}(t)|_2^2)\quad \text{with }\quad \hat c_{\rho} := 2(1-\rho)(c_0^2\lor 1).
\end{equation}

We now prove the estimate \eqref{eq:error-estimate-claim-i} by induction.
\vspace{5pt}

\noindent \emph{Base case.} 
Since $q^{(1)}\equiv\gamma^{(0)}\equiv0$,
\eqref{eq:error-gamma-weighted} with $n=2$ implies that
$$
\gamma^{(1)}(t)\leq \hcr \|\varepsilon^{(1)}\|_\cS^2 = \Gamma^{(1)}(t),
$$
and hence, by \eqref{eq:error-q-leq-gamma} and the definition $c_{*,\rho}=c^* \hcr$,
$$
q^{(2)}(t)\leq c^*\int_t^T \gamma^{(1)}(s)\,\d s \leq c_{*,\rho} \|\varepsilon^{(1)}\|_\cS^2 (T-t) = Q^{(2)}(t).
$$

\noindent \emph{Induction step.} Assume the statement \eqref{eq:error-estimate-claim-i} holds for some $n\geq2$. By \eqref{eq:error-gamma-weighted} and the induction assumption,
\begin{equation}\label{eq:error-gamma-weighted-proof}
\gamma^{(n)}(t)\leq \we\gamma^{(n-1)}(t)+\hcr(q^{(n)}(t)+|\varepsilon^{(n)}(t)|_2^2)\leq \we\Gamma^{(n-1)}(t)+\hcr(Q^{(n)}(t)+\|\varepsilon^{(n)}\|_\cS^2)\leq \cI+\cJ+\cK,
\end{equation}
where 
\begin{align*}
\cI&:= \hcr \sum_{k=1}^{n-1}\|\varepsilon^{(n-k)}\|_{\cS}^2 \sum_{j=1}^k \binom{k-1}{j-1}\we^{k+1-j}\ \frac{c_{*,\rho}^{j-1}(T-t)^{j-1}}{(j-1)!}, \\
\cJ &:= \hcr \sum_{k=1}^{n-1} \|\varepsilon^{(n-k)}\|_{\cS}^2\  \sum_{j=1}^k\binom{k-1}{j-1}
\we^{k-j}\ \frac{c_{*,\rho}^{j}(T-t)^{j}}{j!},\qquad \cK:=\hcr\|\varepsilon^{(n)}\|_{\cS}^2.
\end{align*}
We rewrite $\cI$ and $\cJ$ as follows:
\begin{align*}
\cI&=\hcr\sum_{k=1}^{n-1}\|\varepsilon^{(n-k)}\|_{\cS}^2 \left(\rho^k +\sum_{j=2}^k \binom{k-1}{j-1}\ \rho^{k+1-j} \frac{c_{*,\rho}^{j-1}(T-t)^{j-1}}{(j-1)!}\right)\\
&=\hcr\sum_{k=1}^{n-1}\|\varepsilon^{(n-k)}\|_{\cS}^2\left(\rho^k+\sum_{j=1}^{k-1} \binom{k-1}{j} \rho^{k-j}\frac{c_{*,\rho}^{j}(T-t)^{j}}{j!}\right),\\
\cJ &=\hcr\sum_{k=1}^{n-1} \|\varepsilon^{(n-k)}\|_{\cS}^2\left(\frac{c_{*,\rho}^{k}(T-t)^{k}}{k!}  + \sum_{j=1}^{k-1}\binom{k-1}{j-1}\
\rho^{k-j}\ \frac{c_{*,\rho}^{j}(T-t)^{j}}{j!}\right).
\end{align*}
Then, summing $\cI$ and $\cJ$ leads to
\begin{align*}
\cI+\cJ&=\hcr\sum_{k=1}^{n-1}\|\varepsilon^{(n-k)}\|_{\cS}^2\left(\rho^k +\frac{c_{*,\rho}^{k}(T-t)^{k}}{k!}+\sum_{j=1}^{k-1} \left(\!\binom{k-1}{j}+\binom{k-1}{j-1}\!\right) \rho^{k-j}\frac{c_{*,\rho}^{j}(T-t)^{j}}{j!}\right)\\
&=\hcr\sum_{k=1}^{n-1}\|\varepsilon^{(n-k)}\|_{\cS}^2\left(\rho^k +\frac{c_{*,\rho}^{k}(T-t)^{k}}{k!}+\sum_{j=1}^{k-1} \binom{k}{j} \rho^{k-j}\frac{c_{*,\rho}^{j}(T-t)^{j}}{j!}\right)\\
&=\hcr\sum_{k=1}^{n-1}\|\varepsilon^{(n-k)}\|_{\cS}^2\sum_{j=0}^k \binom{k}{j} \rho^{k-j}\frac{c_{*,\rho}^{j}(T-t)^{j}}{j!} \\
&= \sum_{k=2}^n \|\varepsilon^{(n+1-k)}\|_\cS^2\sum_{j=1}^k \binom{k-1}{j-1}\rho^{k-j} \frac{c_{*,\rho}^{j-1}(T-t)^{j-1}}{(j-1)!}.
\end{align*}
Here, we used the identity
$$
\binom{k-1}{j}+\binom{k-1}{j-1} = \binom{k}{j}
$$
and performed two index shifts in the last line.
Combining this with \eqref{eq:error-gamma-weighted-proof} shows that 
$$
\gamma^{(n)}(t) \leq \cI+\cJ+\cK = \hcr \sum_{k=1}^n\|\varepsilon^{(n+1-k)}\|_\cS^2\sum_{j=1}^k \binom{k-1}{j-1}\rho^{k-j} \frac{c_{*,\rho}^{j-1}(T-t)^{j-1}}{(j-1)!}=\Gamma^{(n)}(t).
$$

To complete the induction step, we now show that $q^{(n+1)}(t)\leq Q^{(n+1)}(t)$. Combining \eqref{eq:error-q-leq-gamma} and the just established inequality $\gamma^{(n)}(t) \le \Gamma^{(n)}(t)$,
\begin{align*}
q^{(n+1)}(t)&\leq c^*\int_t^T \gamma^{(n)}(s)\d s \leq
c^* \int_t^T \Gamma^{(n)}(s)\d s\\
&= c^* \int_t^T \hcr \sum_{k=1}^n\|\varepsilon^{(n+1-k)}\|_\cS^2\sum_{j=1}^k \binom{k-1}{j-1}\rho^{k-j} \frac{c_{*,\rho}^{j-1}(T-s)^{j-1}}{(j-1)!}\ \d s\\
&= c^* \hcr \sum_{k=1}^n\|\varepsilon^{(n+1-k)}\|_\cS^2\sum_{j=1}^k \binom{k-1}{j-1}\rho^{k-j} \frac{c_{*,\rho}^{j-1}(T-t)^{j}}{j!} = Q^{(n+1)}(t).
\end{align*}
This completes the induction step. 
\vspace{5pt}

Finally, let $\varepsilon_*:=\sup_{n\geq1} \|\varepsilon^{(n)}\|_\cS^2$. 
The proof of Theorem~\ref{thm:convergence} implies that
$$
\tilde c := (1+\hcr/\rho)\sum_{k=1}^\infty\rho^k L^{(k)}(-\csr T/\rho)<\infty.
$$
This and the definitions  of  $\Gamma^{(n)}(t)$ and $Q^{(n)}(t)$ imply that
$\Gamma^{(n)}(t) + Q^{(n)}(t) \leq \varepsilon_* \tilde c$, so the estimate \eqref{eq:error-estimate-claim-ii} follows.
\end{proof}

We next show that the $\beta^{(n)}$ form approximate Nash equilibria,
as defined below.

\begin{Def}
\label{def:epsilon_Nash}
{\rm{For $\varepsilon>0$, we say that $\gamma^* \in \sA$ is
an}} $\varepsilon$-Markov perfect equilibrium, {\rm{if
$$
J(t,x,\mu,\gamma^*;\gamma^*)
\leq J(t,x,\mu,\gamma;\gamma^*) + \varepsilon,
\qquad \forall \, (t,x,\mu,\gamma)\in[0,T]\times\sX\times\sA,
$$
where $J$ is the pay-off functional given by \eqref{eq:payoff}.}}
\end{Def}
We first prove a general result.  Recall the value function $v^\beta$
of the tagged player when the other players use the feedback strategy $\beta\in\sA$.

\begin{Lem}
\label{lem:approximate_Nash} 
Let $\alpha^*$ be the unique Markov perfect equilibrium. Then, there exists $\kappa_*>0$ such that, 
if $\gamma^*\in\sA$ satisfies
$$
\|\gamma^*-\alpha^*\|_\cS + \|v^{\gamma^*}-v^{\alpha^*}\|_\cS\leq \delta,
$$
for some  $\delta  \in (0,1]$, then  
$\gamma^*$ is an $\kappa_* \delta$-Markov perfect equilibrium.
\end{Lem}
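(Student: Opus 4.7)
The overall plan is to reduce the $\kappa_*\delta$-equilibrium inequality to a Feynman-Kac estimate bounding the suboptimality gap of $\gamma^*$ against itself, leveraging the fact that $\gamma^*$ is close to its own best response $\alpha^*(\gamma^*)$.

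The first step uses the definition of $v^{\gamma^*}$ as an infimum: $v^{\gamma^*}(t,x,\mu) \le J(t,x,\mu,\gamma;\gamma^*)$ for every admissible $\gamma$, so it suffices to find $\kappa_*>0$, independent of $\gamma^*$, such that $J(t,x,\mu,\gamma^*;\gamma^*) - v^{\gamma^*}(t,x,\mu) \le \kappa_*\delta$. To quantify how close $\gamma^*$ is to $\alpha^*(\gamma^*)$, I would apply Lemma \ref{lem:alpha-leq-value} to the pair $(\gamma^*,\alpha^*)$; since $\alpha^* = \alpha^*(\alpha^*)$ this gives $\|\alpha^*(\gamma^*) - \alpha^*\|_\cS \le c_0 \|v^{\gamma^*} - v^{\alpha^*}\|_\cS \le c_0\delta$, and the triangle inequality then yields $\|\gamma^* - \alpha^*(\gamma^*)\|_\cS \le (c_0+1)\delta$.

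Next I would derive a linear ODE for $e := J(\cdot,\cdot,\cdot,\gamma^*;\gamma^*) - v^{\gamma^*}$. The cost $J(\cdot,\cdot,\cdot,\gamma^*;\gamma^*)$ satisfies the Feynman-Kac equation
$$-\frac{\d}{\d t} J(t,x,\mu,\gamma^*;\gamma^*) = \ell(x,\mu,\gamma^*(t,x,\mu)) + \sL^{\gamma^*}_x J(t)(t,x,\mu) + \sL^{\gamma^*}_\mu J(t)(t,x,\mu),$$
with terminal value $g(x,\mu)$, while, using $\alpha^*(\gamma^*)(t,x,\mu) = \hat\alpha(x,\mu,\Delta_x v^{\gamma^*}(t,\cdot,\mu))$ to realize the infimum in $H$, the equation \eqref{eq:HJB-beta} for $v^{\gamma^*}$ rewrites as $-\partial_t v^{\gamma^*} = \ell(x,\mu,\alpha^*(\gamma^*)) + \sL^{\alpha^*(\gamma^*)}_x v^{\gamma^*} + \sL^{\gamma^*}_\mu v^{\gamma^*}$. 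Subtracting and decomposing $\sL^{\gamma^*}_x J - \sL^{\alpha^*(\gamma^*)}_x v^{\gamma^*} = \sL^{\gamma^*}_x e + (\sL^{\gamma^*}_x - \sL^{\alpha^*(\gamma^*)}_x) v^{\gamma^*}$ gives
$$-\frac{\d}{\d t} e = \eta + \bigl(\sL^{\gamma^*}_x + \sL^{\gamma^*}_\mu\bigr)e, \qquad e(T,\cdot) = 0,$$
with source term
$$\eta(t,x,\mu) = \ell(x,\mu,\gamma^*) - \ell(x,\mu,\alpha^*(\gamma^*)) + \sum_{y\ne x} \lambda^1_y(x,\mu)\bigl(\gamma^*_y - \alpha^*(\gamma^*)_y\bigr)\bigl(v^{\gamma^*}(t,y,\mu) - v^{\gamma^*}(t,x,\mu)\bigr).$$

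To conclude, Lemma \ref{lem:constants} bounds $v^{\gamma^*}$ uniformly, and since $\ell$ is convex and continuous it is Lipschitz on the compact admissible ball $\{|a|_2 \le c_a\}$; combined with $\|\gamma^* - \alpha^*(\gamma^*)\|_\cS \le (c_0+1)\delta$ this gives a pointwise estimate $|\eta(t,x,\mu)| \le C\delta$ for some data-dependent constant $C$. The Feynman-Kac representation applied to the linear equation for $e$,
$$e(t,x,\mu) = \E^{\gamma^*\otimes\gamma^*}\!\left[\int_t^T \eta(s,\Xi_s)\,\d s \,\Big|\, \Xi_t=(x,\mu)\right],$$
then delivers $|e(t,x,\mu)| \le CT\delta$, so that $\kappa_* := CT$ works. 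The main technical point is the linear-in-$\delta$ bound on $\eta$, which rests on Lipschitz regularity of $\ell$ on bounded sets; strong convexity in fact yields an even sharper $O(\delta^2)$ gap, but this refinement is not needed for the stated conclusion.
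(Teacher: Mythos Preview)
Your proof is correct and follows the same overall strategy as the paper: first show that $\gamma^*$ is close to its own best response $\alpha^*(\gamma^*)$, then bound the suboptimality gap $e = J(\cdot,\gamma^*;\gamma^*) - v^{\gamma^*}$. The difference lies in how the second step is executed. You write down the linear backward equation for $e$ with source $\eta$ and invoke its Feynman--Kac representation under $\P^{\gamma^*\otimes\gamma^*}$ to get the pointwise bound $|e|\le T\sup|\eta|$ directly; the paper instead sets $m(t)=|e(t)|_2^2$, derives a differential inequality $-\tfrac{\d}{\d t}m\le 2c_3(\delta+m)$ by splitting into three terms $\cI,\cJ,\cK$, and applies Gr\"onwall. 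Your route is more direct and avoids the squared-norm bookkeeping, at the cost of appealing to the probabilistic representation rather than staying purely analytic.

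One small technical point: you appeal to Lipschitz continuity of $\ell$ on the ball $\{|a|_2\le c_a\}$, but the hypothesis only places $\gamma^*\in\sA$ with $\|\gamma^*-\alpha^*\|_\cS\le\delta\le 1$, so $\gamma^*$ is only guaranteed to lie in $\{|a|_2\le c_a+1\}$. The paper handles this explicitly by introducing the enlarged admissible set $\tilde\sA$ of controls bounded by $c_a+1$. This is cosmetic---$\ell$ is Lipschitz on any compact set---and does not affect your argument.
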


\begin{proof}
We fix  $\delta \in (0,1]$ and $\gamma^* \in \sA$ satisfying the hypothesis.
Let $\alpha^*(\gamma^*)$ be the best response of the tagged player,
and $\tilde\sA$ be the set of all admissible controls whose $\|\cdot\|_\cS$ norm is
bounded by $c_a+1$. 
Note that $\sA^*\subset\tilde \sA$, where $\sA^*$ is defined in \eqref{eq:smallA}.
Further, in view of Lemma \ref{lem:constants}, $\alpha^*, \alpha^*(\gamma^*) \in \sA^*$,
and therefore $\gamma^* \in \tilde \sA$.  
As $\ell$ is convex, there exists a constant $c_1>0$ such that
$$
|\ell(x,\mu,a) - \ell(x,\mu,a')| 
\le c_1 |a-a' |_2,
$$
for any $(x,\mu)\in\sX$ and $a,a' \in \R_+^{d-1}$ satisfying $|a|_2,|a'|_2\leq c_a+1$. We proceed in several steps.\\

\noindent
\emph{Step 1.} Let $\alpha^*(\gamma^*)$ be the best response of the tagged player. 
By Lemma \ref{lem:control-lip},
\begin{align*}
|\alpha^*(\gamma^*)(t,x,\mu)-\alpha^*(t,x,\mu)|_2 
&= |\hat \alpha(x,\mu,\Delta_x v^{\gamma^*}(t,\cdot,\mu))-\hat \alpha(x,\mu,\Delta_x v^{\alpha^*}(t,\cdot,\mu))|_2 \\
&\leq c_a^* |\Delta_x v^{\gamma^*}(t,\cdot,\mu)-\Delta_x v^{\alpha^*}(t,\cdot,\mu)|_2,
\qquad \forall (t,x,\mu)\in[0,T]\times\sX.
\end{align*}
Hence,  $\|\alpha^*(\gamma^*)-\alpha^*\|_\cS \leq 4c_a^*(d-1)\delta.$ 
By the triangle inequality, there exists $c_2>0$ such that 
$$
\|\gamma^*-\alpha^*(\gamma^*)\|_\cS\leq c_2\delta.
$$

\noindent
\emph{Step 2.} For $(t,x,\mu)\in[0,T]\times\sX$, set 
$$
j(t,x,\mu):=J(t,x,\mu,\alpha(\gamma^*);\gamma^*)=v^{\gamma^*}(t,x,\mu),\qquad k(t,x,\mu):=J(t,x,\mu,\gamma^*;\gamma^*),
$$
and
$$
w(t,x,\mu):= j(t,x,\mu)-k(t,x,\mu),\qquad m(t):=|w(t)|_2^2.
$$
By the Feynman-Kac formula, the functions $j$ and $k$ satisfy, for $(t,x,\mu)\in[0,T]\times\sX$,
\begin{align*}
-\frac{\d}{\d t} j(t,x,\mu) &= \ell(x,\mu,\alpha^*(\gamma^*)(t,x,\mu)) + \sL_x^{\alpha^*(\gamma^*)}j(t)(t,x,\mu)+\sL^{\gamma^*}_\mu j(t)(t,x,\mu),\\
-\frac{\d}{\d t} k(t,x,\mu) &= \ell(x,\mu,\gamma^*(t,x,\mu)) + \sL_x^{\gamma^*}k(t)(t,x,\mu)+\sL^{\gamma^*}_\mu k(t)(t,x,\mu),
\end{align*}
with the common terminal condition
$$
j(T,x,\mu)=g(x,\mu)=k(T,x,\mu),\qquad (x,\mu)\in\sX.
$$
For $t\in[0,T]$, we compute
$$
-\frac{\d}{\d t} m(t) = - \sum_{(x,\mu)\in\sX} 
 \frac{\d}{\d t}|w(t,x,\mu)|_2^2 =  \sum_{(x,\mu)\in\sX} \left(-\frac{\d}{\d t} w(t,x,\mu)\right) w(t,x,\mu) =\cI+\cJ+\cK,
$$
where, for $(t,x,\mu)\in[0,T]\times\sX$,
\begin{align*}
\cI &:= \sum_{(x,\mu)\in\sX}\bigl[\ell(x,\mu,\alpha^*(\gamma^*)(t,x,\mu))-\ell(x,\mu,\gamma^*(t,x,\mu))\bigr] \, w(t,x,\mu),\\
\cJ &:= \sum_{(x,\mu)\in\sX} \bigl[\sL_x^{\alpha^*(\gamma^*)}j(t)(t,x,\mu)-\sL_x^{\gamma^*}k(t)(t,x,\mu)\bigr] \, w(t,x,\mu),\\
\cK &:= \sum_{(x,\mu)\in\sX} \sL^{\gamma^*}_\mu w(t)(t,x,\mu)\, w(t,x,\mu).
\end{align*}
For $\cI$, we use the Cauchy-Schwarz inequality, Step 1 and the fact that $m$ is bounded by a constant, depending on the pointwise bound $c_a+1$ of the controls, to see that there exists a constant $c_3>0$ such that
\begin{align*}
|\cI| &\leq c_1 \sum_{(x,\mu)\in\sX} | \alpha^*(\gamma^*)(t,x,\mu)- \gamma^*(t,x,\mu)|_2 |w(t,x,\mu)|\\
&\leq c_1 \|\alpha^*(\gamma^*)-\gamma^*\|_\cS \sqrt{m(t)} \leq c_1c_2\delta \sqrt{m(t)} \leq c_3\delta.
\end{align*}
For $\cJ$, using similar arguments we see that we can redefine $c_3$ such that 
\begin{align*}
|\cJ| &\leq \sum_{(x,\mu)\in\sX}   \bigl[|(\sL_x^{\alpha^*(\gamma^*)}-\sL_x^{\gamma^*})j(t)(t,x,\mu)| +|\sL_x^{\gamma^*}\big(j(t)-k(t)\big)(t,x,\mu)|\bigr] \, |w(t,x,\mu)|\\ 
&\leq \|\lambda^1\|_\infty\sum_{(x,\mu)\in\sX} \sum_{y\neq x} |\alpha^*(\gamma^*)_y(t,x,\mu)-\gamma^*_y(t,x,\mu)| \,  |j(t,y,\mu)-j(t,x,\mu)| |w(t,x,\mu)|\\
&+ \|\lambda^1\|_\infty\sum_{(x,\mu)\in\sX} \sum_{y\neq x}  \gamma^*_y(t,x,\mu) |w(t,y,\mu)-w(t,x,\mu)|\,|w(t,x,\mu)|\leq c_3(\delta + m(t)).
\end{align*}
Finally, for $\cK$, we again use the pointwise bound $c_a+1$ and redefine $c_3$ to see that 
\begin{align*}
|\cK| &\leq \sum_{(x,\mu)\in\sX} \sum_{y\neq z} n^\mu_z \left(\|\lambda^0\|_\infty+\|\lambda^1\|_\infty
\gamma^*_y(t,z,\mu+e_{x,z})\right)[w(t,x,\mu+e_{y,z})-w(t,x,\mu)] \, |w(t,x,\mu)|\\
&\leq c_3 m(t).
\end{align*}
Putting these estimates together yields the differential inequality
$$
-\frac{\d}{\d t} m(t) \leq 2c_3(\delta+m(t)).
$$
By Gr\"{o}nwall's inequality and the fact that $m(T)=0$, we obtain 
$$
0\leq m(t) \leq \delta(e^{2c_3(T-t)}-1),\quad $$
and hence
$$
    \|m\|_\infty\leq\delta(e^{2c_3T}-1) .
$$
This shows that there exist $\kappa_*>0$ with 
$$
\max_{(t,x,\mu)\in[0,T]\times\sX}|v^{\gamma^*}(t,x,\mu)-J(t,x,\mu,\gamma^*;\gamma^*)| =\max_{(t,x,\mu)\in[0,T]\times\sX} |w(t,x,\mu) | \leq \kappa_* \delta.
$$

\noindent
\emph{Step 3.} For $(t,x,\mu,\gamma)\in[0,T]\times\sX\times\sA$, we conclude that
$$
J(t,x,\mu,\gamma^*;\gamma^*) \leq v^{\gamma^*}(t,x,\mu) + \kappa_*  \delta 
\leq  J(t,x,\mu,\gamma;\gamma^*) + \kappa_* \delta.
$$
Hence, $\gamma^*$ is an $\epsilon$-Markov perfect equilibrium with $\epsilon = \kappa_* \delta$.
\end{proof}

Recall the constants $\tilde{c}$ defined in
Theorem \ref{th:cv-error}, $\kappa_*$ defined in Lemma \ref{lem:approximate_Nash},
and set
$$
\varepsilon_*:= \sup_n \|\varepsilon^{(n)}\|_\cS.
$$ 

\begin{Prop}
\label{pro:eN}
For any $\varepsilon_* \le 1/(\tilde{c}+1)$, there exists $n_*=n_*(\varepsilon_*)\ge1$
such that the strategies $\hat{\beta}^{(n)}$ are $ \kappa_* (\tilde{c}+1) \varepsilon_*$-Markov prefect equilibria
for every $ n \ge n_*$. 
\end{Prop}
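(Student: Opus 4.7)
The strategy is to apply Lemma~\ref{lem:approximate_Nash} with $\gamma^* := \hat\beta^{(n)}$ for $n$ sufficiently large, and with the candidate tolerance $\delta := (\tilde c+1)\varepsilon_*$. The assumption $\varepsilon_* \le 1/(\tilde c+1)$ gives $\delta \in (0,1]$, so the lemma applies provided we can verify its hypothesis
\[
\|\hat\beta^{(n)} - \alpha^*\|_\cS + \|v^{\hat\beta^{(n)}} - v^{\alpha^*}\|_\cS \le \delta.
\]
A crucial bookkeeping observation is that the recursive definition \eqref{eq:error-def} gives $v^{\hat\beta^{(n)}} = \hat v^{(n+1)}$, so the second term is exactly $\|\hat v^{(n+1)} - v^{\alpha^*}\|_\cS$. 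I would then split this gap via the triangle inequality through the noise-free Picard iterates $\beta^{(n)}$ and $v^{(n+1)} = v^{\beta^{(n)}}$.

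For the first half of the split, namely $\|\hat\beta^{(n)}-\beta^{(n)}\|_\cS + \|\hat v^{(n+1)}-v^{(n+1)}\|_\cS$, I would invoke Theorem~\ref{th:cv-error}, which bounds the sum of the squared norms by $\tilde c\,\varepsilon_*^2$; Cauchy--Schwarz then yields a control by $\sqrt{2\tilde c}\,\varepsilon_*$, uniformly in $n$. For the second half, $\|\beta^{(n)}-\alpha^*\|_\cS + \|v^{(n+1)}-v^{\alpha^*}\|_\cS$, Theorem~\ref{thm:convergence} provides convergence to zero, and by choosing $n_* = n_*(\varepsilon_*)$ large enough one can force this quantity to be at most any prescribed $\eta > 0$ for all $n \ge n_*$.

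The only delicate step is matching the overall constant. I would set
\[
\eta := (\tilde c + 1 - \sqrt{2\tilde c})\,\varepsilon_*,
\]
which is strictly positive because $(\tilde c+1)^2 - 2\tilde c = \tilde c^2 + 1 > 0$ for all $\tilde c \ge 0$; hence such an $n_*$ exists. Combining the two halves gives
\[
\|\hat\beta^{(n)}-\alpha^*\|_\cS + \|\hat v^{(n+1)}-v^{\alpha^*}\|_\cS \;\le\; \sqrt{2\tilde c}\,\varepsilon_* + \eta \;=\; (\tilde c+1)\,\varepsilon_* \;=\; \delta,
\]
and Lemma~\ref{lem:approximate_Nash} concludes that $\hat\beta^{(n)}$ is a $\kappa_*\delta = \kappa_*(\tilde c+1)\varepsilon_*$-Markov perfect equilibrium for all $n \ge n_*$.

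There is essentially no hard analytic content here beyond what is already in Theorems~\ref{thm:convergence}, \ref{th:cv-error} and Lemma~\ref{lem:approximate_Nash}; the proof is a clean triangle inequality argument. The only point requiring mild care is allocating the budget between the (persistent) $\sqrt{\tilde c}$-sized error floor produced by the noise propagation and the (vanishing) convergence residual of the noise-free iteration, which is why the statement is conditional on $n \ge n_*(\varepsilon_*)$ and not uniform in $n$.
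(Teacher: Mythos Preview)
Your proof is correct and follows essentially the same route as the paper: split $\|\hat\beta^{(n)}-\alpha^*\|_\cS + \|v^{\hat\beta^{(n)}}-v^{\alpha^*}\|_\cS$ through the noise-free iterates $\beta^{(n)}$, $v^{(n+1)}$, bound the error half via Theorem~\ref{th:cv-error} and the convergence half via Theorem~\ref{thm:convergence}, then invoke Lemma~\ref{lem:approximate_Nash}. Your bookkeeping is in fact slightly more careful than the paper's (you track the index shift $v^{\hat\beta^{(n)}}=\hat v^{(n+1)}$ and the passage from squared norms to norms explicitly); the only cosmetic slip is that, because the indices in $\gamma^{(n)}$ and $q^{(n+1)}$ differ, the uniform bound from Theorem~\ref{th:cv-error} gives $2\tilde c\,\varepsilon_*^2$ rather than $\tilde c\,\varepsilon_*^2$ for their sum, so the error floor is $2\sqrt{\tilde c}\,\varepsilon_*$ rather than $\sqrt{2\tilde c}\,\varepsilon_*$ --- but since $\tilde c+1-2\sqrt{\tilde c}=(\sqrt{\tilde c}-1)^2\ge 0$ this does not affect the argument.
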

\begin{proof}
Let $\alpha^*\in\sA^*$ be the unique Markov perfect equilibrium.
By Theorem \ref{thm:convergence}, there is $n_*\geq1$ such that
$\|\beta^{(n)}-\alpha^*\|_\cS 
+ \|v^{\beta^{(n)}}-v^{\alpha^*}\|_\cS\leq  \varepsilon_*$ for all $n \ge n_*$.
In view of Theorem \ref{th:cv-error}, for every $n \ge 1$, we have 
$\|\hat{\beta}^{(n)}-\beta^{(n)}\|_\cS 
+ \|v^{\hat{\beta}^{(n)}}-v^{\beta^{(n)}}\|_\cS\leq \tilde{c} \varepsilon_*$. Then,
by triangle inequality,
$$
\|\hat{\beta}^{(n)}-\alpha^*\|_\cS 
+ \|v^{\hat{\beta}^{(n)}}-v^{\alpha^*}\|_\cS\leq (\tilde{c}+1) \varepsilon_*,
\qquad \forall n \ge n_*.
$$
As by hypothesis $(\tilde{c}+1) \varepsilon_* \le1$,
we complete the proof by invoking Lemma \ref{lem:approximate_Nash}. 
\end{proof}

Clearly, with some direct analysis one can obtain the analogue
of the above result with no conditions on the error $\varepsilon_*$.
However, we chose to omit it as the result is inconsequential.
Finally, we state the analogous results for $\we=0$
without a proof.

\begin{Prop}
Set $\we=0$, let $(v^{(n)},\alpha^{(n)},\beta^{(n)})_{n\ge1}$ be as in subsection \ref{ss:Picard}, and let $(\hat v^{(n)},\hat \alpha^{(n)},\hat \beta^{(n)})_{n\ge1}$ and $(q^{(n)},\gamma^{(n)})_{n\ge1}$ be defined as above in \eqref{eq:error-def} but with $\rho=0$. Redefine
$$
\Gamma^{(n-1)}(t)= \hat c_0\sum_{k=1}^{n-1}\frac{c_{*,0}^{k-1}(T-t)^{k-1}}{(k-1)!}\|\varepsilon^{(n-k)}\|_\cS^2,\quad Q^{(n)}(t)= \sum_{k=1}^{n-1}\frac{c_{*,0}^k(T-t)^{k}}{k!}\|\varepsilon^{(n-k)}\|_{\cS}^2,\quad t\in[0,T],\ n\ge2,
$$
where $c_{*,0}=c^*\hat c_0$, $\hat c_0=2(c_0^2\lor1)$. Here, $c_0$ is defined in Lemma \ref{lem:alpha-leq-value} and $c^*$ is defined in Lemma \ref{lem:value-leq-beta}. Then,
$$
\gamma^{(n-1)}(t)\leq \Gamma^{(n-1)}(t),\qquad q^{(n)}(t)\leq Q^{(n)}(t),
\qquad t\in[0,T],\ n\geq2.
$$
Furthermore, the conclusion of Proposition \ref{pro:eN} holds with $\rho=0$.
\end{Prop}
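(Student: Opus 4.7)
The plan is to mirror the structure of the proof of Theorem \ref{th:cv-error} but in the substantially simpler setting $\we=0$, where the mixing term $\we\gamma^{(n-1)}$ disappears from the recursion for $\gamma^{(n)}$. Specifically, since $\hat{\beta}^{(n)}=\hat{\alpha}^{(n)}+\varepsilon^{(n)}$ and $\beta^{(n)}=\alpha^{(n)}$, I obtain
\[
\hat{\beta}^{(n)}(t)-\beta^{(n)}(t)
=(\hat{\alpha}^{(n)}(t)-\alpha^{(n)}(t))+\varepsilon^{(n)}(t).
\]
Using $(a+b)^2\leq 2a^2+2b^2$, Lemma \ref{lem:alpha-leq-value}, and the identification $\hat\alpha^{(n)}=\alpha^*(\hat\beta^{(n-1)})$, $\alpha^{(n)}=\alpha^*(\beta^{(n-1)})$, I would derive the one-step recursion
\[
\gamma^{(n)}(t)\leq \hat c_0\bigl(q^{(n)}(t)+|\varepsilon^{(n)}(t)|_2^2\bigr),
\qquad
q^{(n)}(t)\leq c^*\int_t^T\gamma^{(n-1)}(s)\,\d s,
\]
with $\hat c_0=2(c_0^2\vee 1)$, the second bound coming directly from Lemma \ref{lem:value-leq-beta} applied to $\hat v^{(n)}=v^{\hat\beta^{(n-1)}}$ and $v^{(n)}=v^{\beta^{(n-1)}}$.

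The base case $n=2$ is immediate: since $\gamma^{(0)}=q^{(1)}\equiv 0$, the above two inequalities give $\gamma^{(1)}(t)\leq\hat c_0\|\varepsilon^{(1)}\|_\cS^2=\Gamma^{(1)}(t)$ and then $q^{(2)}(t)\leq c^*\hat c_0\|\varepsilon^{(1)}\|_\cS^2(T-t)=Q^{(2)}(t)$, exactly matching the definitions with $c_{*,0}=c^*\hat c_0$.

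For the induction step, assuming $\gamma^{(n-1)}\leq\Gamma^{(n-1)}$ and $q^{(n)}\leq Q^{(n)}$, I would plug the bound on $q^{(n)}$ into the recursion for $\gamma^{(n)}$:
\[
\gamma^{(n)}(t)\leq \hat c_0 Q^{(n)}(t)+\hat c_0\|\varepsilon^{(n)}\|_\cS^2
=\hat c_0\sum_{k=1}^{n-1}\frac{c_{*,0}^k(T-t)^k}{k!}\|\varepsilon^{(n-k)}\|_\cS^2+\hat c_0\|\varepsilon^{(n)}\|_\cS^2.
\]
Performing the index shift $j=k+1$ in the sum and recognizing the term $\hat c_0\|\varepsilon^{(n)}\|_\cS^2$ as the $j=1$ summand yields exactly $\Gamma^{(n)}(t)=\hat c_0\sum_{j=1}^{n}\frac{c_{*,0}^{j-1}(T-t)^{j-1}}{(j-1)!}\|\varepsilon^{(n+1-j)}\|_\cS^2$. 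Integrating $\gamma^{(n)}\leq\Gamma^{(n)}$ against $c^*$ from $t$ to $T$ and using $c^*\hat c_0=c_{*,0}$ produces $Q^{(n+1)}(t)$ term by term. In this pure-Picard case the combinatorial identity $\binom{k-1}{j}+\binom{k-1}{j-1}=\binom{k}{j}$ needed in the weighted proof is not invoked; the only bookkeeping is a single index shift, which is the only point requiring care.

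For the analogue of Proposition \ref{pro:eN}, I would observe that $\Gamma^{(n-1)}(t)+Q^{(n)}(t)\leq\tilde c\,\varepsilon_*^2$ uniformly in $n$, with $\tilde c=(\hat c_0+1)\,e^{c_{*,0}T}$, by bounding each series termwise by the exponential series of $c_{*,0}T$. Combined with Theorem \ref{thm:convergence} at $\we=0$, which gives $\|\beta^{(n)}-\alpha^*\|_\cS+\|v^{\beta^{(n)}}-v^{\alpha^*}\|_\cS\to 0$, the triangle inequality produces a uniform $(\tilde c+1)\varepsilon_*$ bound of $\hat\beta^{(n)}$ and $v^{\hat\beta^{(n)}}$ from the equilibrium pair for all $n$ sufficiently large, and Lemma \ref{lem:approximate_Nash} then delivers the $\kappa_*(\tilde c+1)\varepsilon_*$-Markov perfect equilibrium property. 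I do not expect any substantive obstacle in this argument; the only subtlety is keeping consistent track of the index shift between the $\Gamma$- and $Q$-sequences.
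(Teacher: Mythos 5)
Your proposal is correct and follows exactly the route the paper intends: the paper states this proposition without proof, remarking only that it follows by specializing the argument of Theorem~\ref{th:cv-error} to $\we=0$, and your write-up carries out precisely that specialization, with the base case, the one-step recursions $\gamma^{(n)}\le\hat c_0(q^{(n)}+|\varepsilon^{(n)}|_2^2)$ and $q^{(n)}\le c^*\int_t^T\gamma^{(n-1)}$, the index-shift induction, and the uniform exponential-series bound all matching the constants $\hat c_0=2(c_0^2\vee1)$ and $c_{*,0}=c^*\hat c_0$ in the statement.
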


\begin{Rmk}\label{rmk:propopgation}
{\rm{The analysis of this section shows that the error
propagation remains bounded throughout the iterations.
However, in many steps the estimates are not tight
and  that one might obtain better bounds.
In particular, we believe that 
under suitable conditions, 
in the weighted Picard iterations averaging takes place
making the variance of the error decrease rapidly.
As these are not the main goals
of the paper, we chose to relegate these intriguing
questions  to future studies.}}
\end{Rmk}

\section{Numerical algorithms}
\label{sec:numerical}

We propose two numerical schemes associated with the (weighted) Picard algorithms. As described in Section~\ref{sec:iterations}, at the core of any such algorithm lies the computation of the tagged player's best response $\alpha^*(\beta)$ given an action $\beta\in\sA$ of other players,
and we propose two schemes
to approximate $\alpha^*(\beta)$:
\begin{enumerate}[(i)]
\item an ODE solver for the the dynamic programming equation \eqref{eq:HJB-beta};
\item a Monte Carlo simulation with empirical risk minimization.
\end{enumerate}

\subsection{Iterations based on the HJB equation}

This subsection presents an algorithm that uses readily available packages to numerically compute solutions to ODEs. The crucial difference to the algorithm described in Section~\ref{ssec:neural-network} is that, in theory, this algorithm computes the MPE and does not depend on the initial distribution.

\begin{algorithm}[H]
\caption{(Weighted) Picard iteration based on ODE solvers}
\begin{algorithmic}[1]
\State \textbf{Input:} Time grid $\cT$, number of iterations $N_{\mathrm{iter}}$, number of players $N$, weighting constant $\we\in[0,1)$.
\State Initialize $\hat \beta^{(0)}:\cT\times\sX\mapsto\R_+^{d-1}$.
\For{$n = 1$ to $N_{\mathrm{iter}}$}
    \State Using an ODE solver, compute the solution $\hat v$ to \eqref{eq:HJB-beta} with $\beta=\hat\beta^{(n-1)}$ on the grid $\cT$. This yields a vector $\hat v:\cT\times\sX\mapsto\R$.
    \State Numerically compute the optimal control $\hat \alpha^{(n)}:\cT\times\sX\mapsto\R_+^{d-1}$ using the relation \eqref{eq:alpha*}.
    \State Set $\hat \beta^{(n)} \gets \we \hat  \beta^{(n-1)}+(1-\we)\hat \alpha^{(n)}$.
\EndFor
\State \textbf{Return} Final control $\beta^{(N_{\mathrm{iter}})}$.
\end{algorithmic}
\end{algorithm}

\begin{Rmk}
\label{rem:ode}
{\rm{We could also use readily available packages to solve the~\eqref{eq:N-NLL} equation directly.
In contrast, the above algorithm solves 
the dynamic programming equation \eqref{eq:HJB-beta}
in each Picard iteration. 
While possibly slower than the direct method,
this approach might be more stable.
Indeed, the time discretization of the~\eqref{eq:N-NLL} equation  
requires the time steps
to be sufficiently small to ensure unique of solutions,
as shown in~\cite{HSY}.  
In contrast,  \eqref{eq:HJB-beta} is a  dynamic programming equation  and, as such, 
it is always monotone and  does not 
suffer from this difficulty.  Although most ODE solver
employ implicit schemes and may not require the smallness of time steps,
we believe that this structural difference could become significant for 
large systems.  Additionally, in Section~\ref{sec:example-twostates}
we have numerically  observed that, for large values of $N$,
our method provides accurate results while the 
solution provided by the direct method does not yield a MPE.}}
\end{Rmk}

\subsection{Monte Carlo simulations with empirical risk minimization}\label{ssec:neural-network}

In this case, controls are directly represented by neural networks. As controls are functions on $[0,T]\times\cX\times\Sigma_N^{d-1}\subset\R\times\R^d\times\R^{d-1}$, the input dimension of a neural network representing these functions is $1+d+(d-1)=2d$. If there is a natural embedding of $\cX$ into $\R$, this dimension can be reduced to $d+1$. The output of the neural network are $d-1$ many jump rates. Given $\rho\in[0,1)$, we proceed as follows to approximate a MPE.

\begin{algorithm}[H]
\caption{Neural (weighted) Picard iteration\label{algo:nn-picard}}
\begin{algorithmic}[1]
\State \textbf{Input:} Time horizon $T$, number of iterations $N_{\mathrm{iter}}$, number of players $N$, weighting constant $\we\in[0,1)$, initial distribution $\theta_0\in\Sigma^d_{N+1}$ of tagged and untagged players, number of simulated trajectories $M$.
\State Initialize $\hat\beta^{(0)}$ as a neural network.
\For{$n = 1$ to $N_{\mathrm{iter}}$}
    \State Initialize $\hat\alpha^{(n)}$ as a neural network.
    \For{each training epoch}
        \State Monte Carlo simulation: Simulate $M$ trajectories by the pooled jump simulation described in the text with initial distribution $\theta_0$, in which the tagged player uses $\hat\alpha^{(n)}$ and other players use $\hat\beta^{(n-1)}$.
        \State Estimate the cost $\int_\sX J(x_0,\mu_0, \hat\alpha^{(n)}; \hat\beta^{(n-1)}) \,\theta_0(\d x_0\times \d \mu_0)$ using the simulated trajectories.
        \State Update the parameters of $\alpha^{(n)}$ via gradient descent.
    \EndFor
    \State Set $\hat\beta^{(n)} \gets \we \hat\beta^{(n-1)}+(1-\we)\hat\alpha^{(n)}$.
\EndFor
\State \textbf{Return} Final control $\hat\beta^{(N_{\mathrm{iter}})}$.
\end{algorithmic}
\end{algorithm}

The estimation of the expected cost $J$ is done by Monte Carlo simulations of the trajectories of both the tagged and untagged players, starting from a prescribed initial distribution. At time $t\in[0,T]$ we recall that that $X_t\in\cX$ denotes the position of the tagged player while $\mu_t \in \Sigma^{d-1}_N$ tracks the empirical distribution of the untagged players. From an untagged player's view, who finds themselves in a state $z\in\cX$, the empirical distribution of others is given by 
$$
\mu_t  +e_{X_t,z} = \mu_t + \frac1N (e_{X_t}-e_z).
$$
We now assume that all other players use a control $\beta\in\sA$ while the tagged player uses $\alpha\in\sA$ and we wish to simulate trajectories for $\Xi_t = (X_t,\mu_t)$ according to the measure $\P^{\alpha\otimes\beta}$. Given a time $t\in[0,T]$ and a current state $\Xi_t=(x,\mu)\in\sX$, we proceed as follows. 
\begin{enumerate}
\item \textbf{Pooled jump simulation.} Compute the total jump rate of both the tagged and untagged players:
$$
\Lambda(t,x,\mu) := \Lambda^{\mathrm{tagged}}(t,x,\mu) + \Lambda^{\mathrm{untagged}}(t,x,\mu),
$$
where 
\begin{align*}
\Lambda^{\mathrm{tagged}}(t,x,\mu) &:= \sum_{y\neq x} \bigl( \lambda^0_y(x,\mu)+\lambda^1_y(x,\mu)\alpha_y(t,x,\mu)\bigr ),\\
\Lambda^{\mathrm{untagged}}(t,x,\mu) &:= \sum_{z\in\cX}\Lambda_z^{\mathrm{untagged}}(t,x,\mu),\\
\Lambda_z^{\mathrm{untagged}}(t,x,\mu) &:= \sum_{y\neq z}n^\mu_z \, \bigl(\lambda^0_y(z,\mu+e_{x,z})+\lambda^1_y(z,\mu+e_{x,z})\beta_y(t,z,\mu+e_{x,z})\bigr),\quad z\in\cX.
\end{align*}
Then:
\begin{itemize}
\item Sample an exponential waiting time $\tau \sim \mathrm{Exponential}(\Lambda(t,x,\mu))$.
\item Randomly choose the tagged or an untagged player, identified by their current state, according to the distribution 
$$
\left(\frac{\Lambda^{\mathrm{tagged}}(t,x,\mu)}{\Lambda(t,x,\mu)},\frac{\Lambda_1^{\mathrm{untagged}}(t,x,\mu)}{\Lambda(t,x,\mu)},\ldots,\frac{\Lambda_d^{\mathrm{untagged}}(t,x,\mu)}{\Lambda(t,x,\mu)}\right)\in[0,1]^{d+1}.
$$
\item If the tagged player is chosen, sample a new state according to the controlled jump rates, i.e., if $X'\in\cX$ denotes the new state, then
$$
\P(X'=y) = \frac{\lambda^0_y(x,\mu)+\lambda^1_y(x,\mu)\alpha_y(t,x,\mu)}{\Lambda^{\mathrm{tagged}}(t,x,\mu)},\quad y\neq x.
$$
Similarly, if an untagged player in a state $z\in\cX$ is chosen, and $Z'$ denotes the new state of this player, then
$$
\P(Z'=y) = \frac{\lambda^0_y(z,\mu+e_{x,z})+\lambda^1_y(z,\mu+e_{x,z})\beta_y(t,z,\mu+e_{x,z})}{\Lambda^{\mathrm{untagged}}_z(t,x,\mu)},\quad y\neq z.
$$
\end{itemize}

\item \textbf{Update:} The system time is updated to $t+\tau$. If the tagged player jumps, then we update 
$$
\Xi_{t+\tau} = (X_{t+\tau},\mu_{t+\tau}) = (X',\mu_t).
$$
If an untagged player in state $z\in\cX$ jumps, then we set 
$$
\Xi_{t+\tau} = (X_{t+\tau},\mu_{t+\tau}) = (X_t, \mu_t + e_{Z',z}).
$$
This process is repeated until terminal time $T$.
\end{enumerate}

This pooled jump scheme allows exact simulation of continuous-time dynamics without time discretization.
The control networks $\alpha$ and $\beta$ are implemented as fully connected neural networks with two or more hidden layers 
and  softplus activation on the output layer to ensure positivity of jump rates.
To estimate the expected cost $J$, we simulate multiple independent trajectories using the pooled jump simulator described above. 
The optimal control is estimated using empirical risk minimization. Gradients are backpropagated 
through the neural network using the Adam algorithm.

\section{Examples}
\label{sec:examples}
In this section, we apply  the two algorithms outlined in the previous section
to the two state Kuramoto synchronization games and to a four-state
cyber-security model.

\subsection{Two-state examples without uniqueness in the mean-field}
\label{sec:example-twostates}

In this section, we first consider two MFG problems. In the corresponding finite-player problems, we then compare Picard iterations based on the HJB equation with a direct approach of solving the \eqref{eq:N-NLL} equation. The two-state problems we consider exhibit synchronization phenomena that result in non-unique solutions in the mean-field limit, with solutions to the mean-field NLL equation (or master equation) becoming discontinuous for sufficiently large time horizons. By interpreting the mean-field NLL equation as a scalar conservation law, we show that the unique entropy solution to the mean-field NLL can be approximated by our Picard algorithms for the finite-player game. Additionally, we demonstrate the instability of a standard ODE solver when directly solving \eqref{eq:N-NLL} in certain parameter regimes. We consider two variants of the problem. 

\emph{First MFG model.} Our first model follows~\cite{cecchin2019convergence}, which considers, for a \emph{coupling strength} $\kappa>0$,  
$$
\cX=\{0,1\},\quad \lambda^0\equiv 0,\quad \lambda^1\equiv1,\quad \ell(x,\mu,a)=\frac{1}{2} a^2 ,\quad g(x,\mu) \equiv \kappa\big(\mu(\{1\})\chi_{x=0}+\mu(\{0\})\chi_{x=1}\big)
$$
for $(x,\mu,a)\in\cX\times\Sigma^1\times\R_+$. Note that the terminal coupling cost encourages synchronization and satisfies
$$
\int_\cX (g(x,\mu')-g(x,\mu))\, (\mu'-\mu)(\d x) = 2\kappa(\mu'-\mu)(\{1\})(g'-g)(\{0\})  = - 2\kappa (\mu'-\mu)(\{0\})^2\leq0,
$$
for any $\mu,\mu'\in\Sigma^1$, and, hence, is not Lasry-Lions monotone. In this two-state model, we identify probability measures $\mu=(\mu(\{0\}),\mu(\{1\}))\in\Sigma^1$ with their value $p = \mu(\{1\})\in[0,1]$. Then, the mean-field NLL equation reads, for $(t,x,p) \in [0,T] \times \cX \times [0,1]$, 
\begin{align*}
-\partial_t \mathcal{U}(t,x,p) &= -
\frac{1}{2} \left[ \left( \cU(t,1-x,p)-\cU(t,x,p) \right)^- \right]^2 -\partial_p \cU(t,x,p)\,[\cU(t,0,p)-\cU(t,1,p)]^-\,p\\
&\quad +\, \partial_p \cU(t,x,p)\,[\cU(t,0,p)-\cU(t,1,p)]^+\,(1-p),
\end{align*}
with the terminal condition $\cU(T,x,p)=g(x,p)$. This system can be recognized as a scalar conservation law by introducing
$$
\mathcal{V}(t,q) = \mathcal{U}(T - t, 0,(q+1)/2) - \mathcal{U}(T - t, 1, (q+1)/2), \quad (t,q) \in [0,T] \times [-1,1].
$$
Note that we reversed time and introduced the variable $q=2p-1$ to achieve symmetry. Then, a direct calculation shows that $\mathcal{V}$ solves
\begin{equation}
\label{eq:conservation-law}
\begin{cases}
\partial_t \mathcal{V}(t,q) + \partial_{q} \mathfrak{g}(q, \mathcal{V}(t,q)) = 0, \\
\mathcal{V}(0,q) = \kappa q
\end{cases}
\end{equation}
for $(t,q) \in [0,T] \times [-1,1]$, where
$$
\mathfrak{g}(q, v) = q \frac{v |v|}{2} - \frac{v^2}{2}, \quad (q, v) \in [-1,1] \times \mathbb{R}.
$$
The paper \cite{cecchin2019convergence} shows that, for $\kappa=2$, the equation \eqref{eq:conservation-law} admits a unique entropy solution, and that, for large time horizons, a discontinuity emerges at the origin $q=0$.

\emph{Second MFG model.} Our second model features a running coupling cost but no terminal cost, and we follow~\cite{hofer2025synchronization} and~\cite{bayraktar2020non}, which provide an analysis of the corresponding mean-field NLL equation using the theory of scalar conservations laws. Here, we set
$$
\cX=\{0,1\},\quad \lambda^0\equiv \sigma^2,\quad \lambda^1\equiv1,\quad \ell(x,\mu,a)=\frac{1}{2} a^2 + \kappa\big(\mu(\{1\})\chi_{x=0}+\mu(\{0\})\chi_{x=1}\big),\quad g(x,\mu) \equiv 0,
$$
for $(x,\mu,a)\in\cX\times\Sigma^1\times\R_+$, where $\sigma^2\geq0$ is the \emph{thermal noise} and $\kappa>0$ is again the \emph{coupling strength}. The running cost $\ell(x,\mu,a)$ encourages players to be of the same type as others. Following the computation above, we note that this mean-field game is not Lasry-Lions monotone. Consistent with this observation, \cite{hofer2025synchronization} shows that for any $(\sigma^2,\kappa)$ that satisfy 
$$
\kappa>4\sigma^4,
$$ 
the game admits multiple MFG equilibria for sufficiently large time horizons when started in the uniform distribution. 
Again with the identification of a measure $\mu=(\mu(\{0\}),\mu(\{1\}))\in\Sigma^1$ with its value $p = \mu(\{1\})\in[0,1]$, the mean-field NLL equation reads, for $(t,x,p) \in [0,T] \times \cX \times [0,1]$, 
\begin{align*}
-\partial_t \mathcal{U}(t,x,p) &= \kappa(p\chi_{x=0}+(1-p)\chi_{x=1})\\
&\quad -
\frac{1}{2} \left[ \left( \cU(t,1-x,p)-\cU(t,x,p) \right)^- \right]^2 
+\sigma^2(\cU(t,1-x,p)-\cU(t,x,p))\\
&\quad -\partial_p \cU(t,x,p)\big([\cU(t,0,p)-\cU(t,1,p)]^-+\sigma^2\big)p\\
&\quad +\partial_p \cU(t,x,p)\big([\cU(t,0,p)-\cU(t,1,p)]^++\sigma^2\big)(1-p),
\end{align*}
and with terminal condition $\mathcal{U}(T,x,p) = 0$, see~\cite{bayraktar2020non}.
This system can be recognized as a scalar conservation law by introducing
$$
\mathcal{V}(t,q) = \mathcal{U}(T - t, 0,(q+1)/2) - \mathcal{U}(T - t, 1, (q+1)/2), \quad (t,q) \in [0,T] \times [-1,1].
$$
Note that we again reversed time and set $q=2p-1$. Then, a direct calculation shows that $\mathcal{Z}$ solves
\begin{equation}
\label{eq:conservation-law2} 
\begin{cases}
\partial_t \mathcal{V}(t,q) + \partial_{q} \mathfrak{g}(q, \mathcal{V}(t,q)) = 0, \\
\mathcal{V}(0,q) = 0
\end{cases}
\end{equation}
for $(t,q) \in [0,T] \times [-1,1]$, where
$$
 \mathfrak{g}(q, v) = 2\sigma^2qv+ q \frac{v |v|}{2} - \frac{v^2}{2} - \kappa\frac{q^2}{2}, \quad (q, v) \in [-1,1] \times \mathbb{R}.
$$
The work~\cite{bayraktar2020non} shows that for $\kappa=1$ and $\sigma^2<1/2$ and there is a unique entropy solution which may only jump at the uniform distribution, corresponding to $q=0$. This result can be extended to arbitrary values of $\kappa>0$.\\

We now compare different methods based on ODE solvers to approximate solutions to the mean-field NLL equation using the finite-player problem. In the following, we use the Python package \texttt{scipy.integrate.RK45} which is based on an explicit Runge-Kutta method of order 5(4).\\

\emph{Numerical results for the first model.} For the first model, we numerically show that the Picard algorithm based on an ODE solver coincides with the numerical solution of the \eqref{eq:N-NLL}. We choose the parameters 
$$
N=500,\quad T=1,\quad \Delta t = 0.01,\quad \kappa=2.
$$
Here, $\Delta t$ is the step size used to discretize the time interval $[0,T]$. In Figure \ref{fig:cecchin-model}, we plot the difference of the value functions $v^N(t,0)-v^N(t,1)$ at initial time $t=0$, which also coincides the optimal control of the tagged player, with the analytical solution in the mean-field limit. The plot in Figure \ref{fig:cecchin-model-NLL} is obtained by directly solving the \eqref{eq:N-NLL} equation, while the plot in Figure \ref{fig:cecchin-model-picard} uses the Picard iteration approach.
\begin{figure}[h]
    \centering
    \begin{subfigure}[t]{0.48\textwidth}
        \centering
        \includegraphics[width=\linewidth]{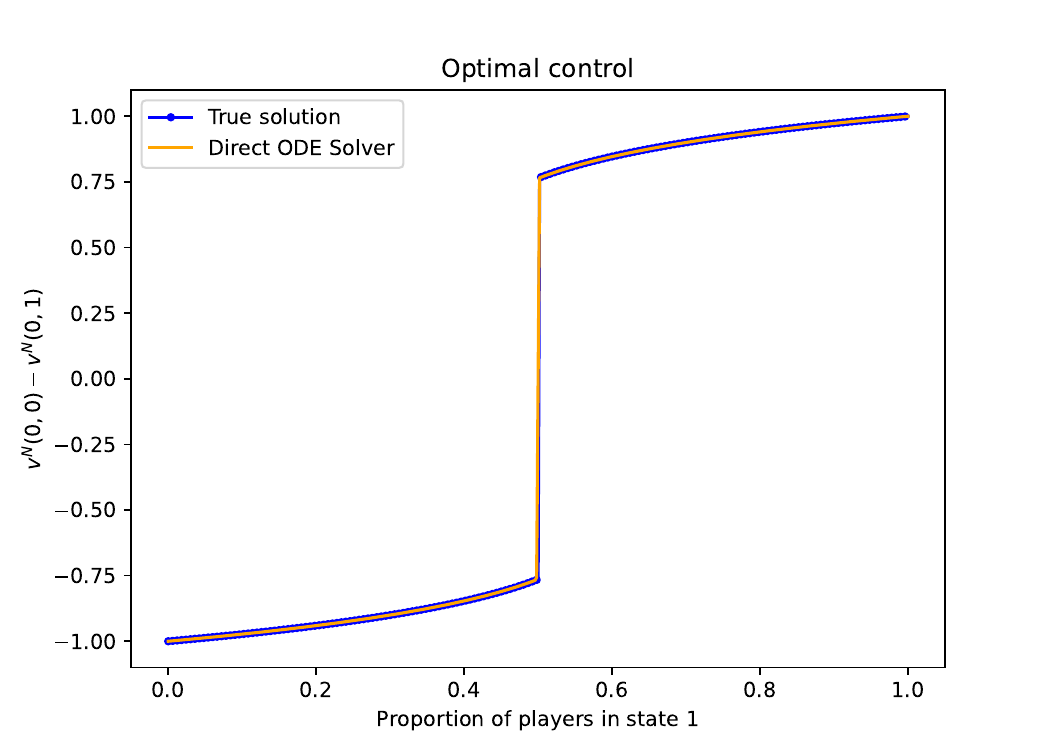}
        \caption{Direct solving~\eqref{eq:N-NLL}}
        \label{fig:cecchin-model-NLL}
    \end{subfigure}
    \hfill
    \begin{subfigure}[t]{0.48\textwidth}
        \centering
        \includegraphics[width=\linewidth]{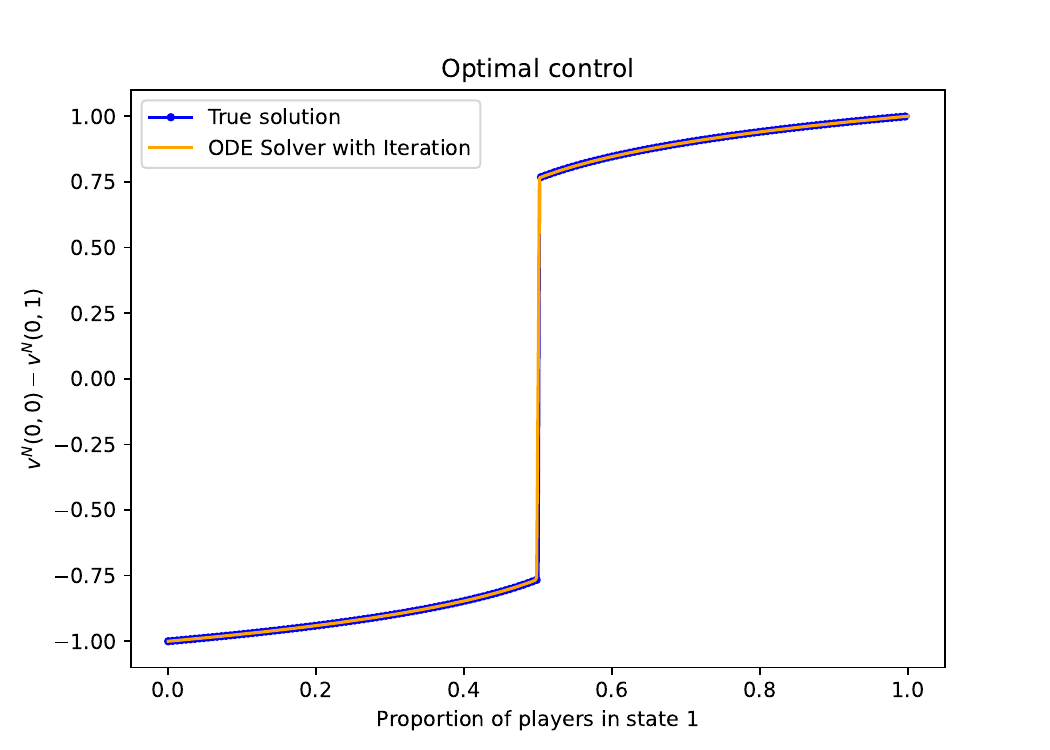}
        \caption{Picard iteration based on HJB equation}
        \label{fig:cecchin-model-picard}
    \end{subfigure}
    \caption{First model: plot of $p\mapsto v^N(t,0,p)-v^N(t,1,p)$ at initial time $t=0$.}
    \label{fig:cecchin-model}
\end{figure}

Next, we show numerically the convergence rates of Picard and weighted Picard iterations.  

\begin{figure}[h]
    \centering
    \begin{subfigure}[t]{0.48\textwidth}
        \centering
        \includegraphics[width=\linewidth]{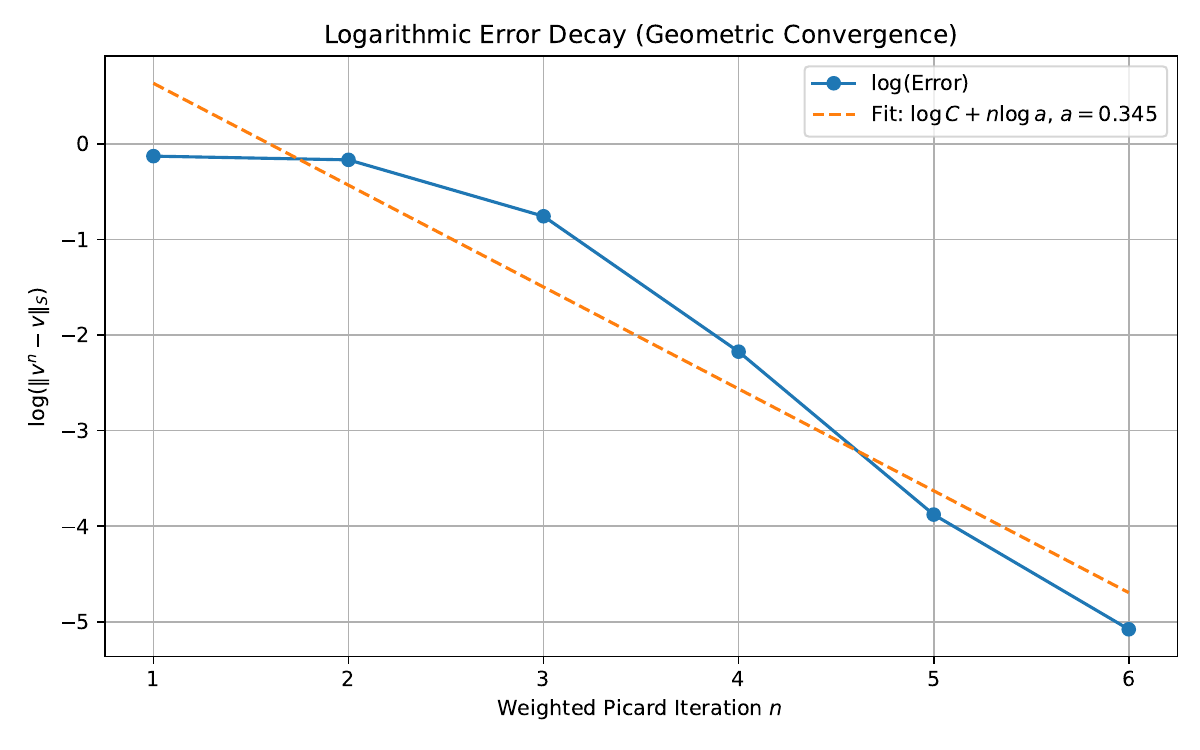}
        \caption{Picard iteration}
    \end{subfigure}
    \hfill
    \begin{subfigure}[t]{0.48\textwidth}
        \centering
        \includegraphics[width=\linewidth]{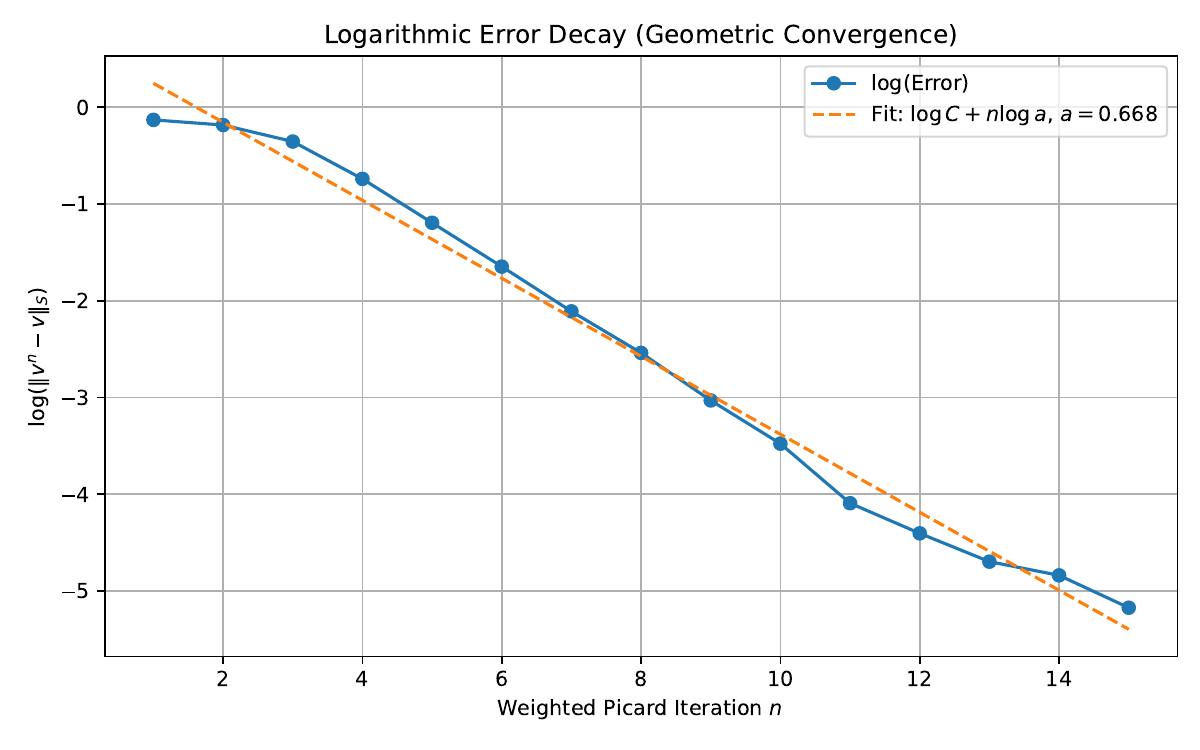}
        \caption{Weighted Picard iteration, $\we=0.5$}
    \end{subfigure}
    \caption{Convergence rates of the Picard and weighted Picard iterations ($N=100$)}
    \label{fig:convergence}
\end{figure}

For $N=100$, Figure~\ref{fig:convergence} shows the convergence rate of the value functions under Picard and weighted Picard iterations. 
We then use linear regression to fit it with a linear model corroborating the theoretical rates derived in Proposition \ref{prop:rate}.
Furthermore, the numerical results indicate that the convergence rate of the 
weighted Picard iteration is slower than the one of the Picard iteration.\\


\emph{Numerical results for the second model.} When solving the finite-player NLL equation \eqref{eq:N-NLL} in the second model directly for a large number of players $N$, we observed numerical instability for parameter regimes in which  the Picard algorithm still appears to be stable. We choose the following parameters to illustrate this phenomenon:
$$
N=500,\quad T=10,\quad \Delta t= 0.01,\quad  \kappa=6,\quad \sigma^2 = 0.5.
$$
Here, $\Delta t$ again denotes the step size used to discretize the time interval $[0,T]$. For these parameters, the left plot Figure~\ref{fig:two-state-NLL} is obtained by numerically solving~\eqref{eq:N-NLL} directly. We see that the jump occurs at the wrong location (not $1/2$), contradicting the theoretical results by \cite{bayraktar2020non}. The right plot Figure~\ref{fig:two-state-picard} shows the result after twelve Picard iterations, based on numerically solving the HJB equation, with the jump correctly occurring at $p=1/2$.

\begin{figure}[htbp]
    \centering
    \begin{subfigure}[t]{0.48\textwidth}
        \centering
        \includegraphics[width=\linewidth]{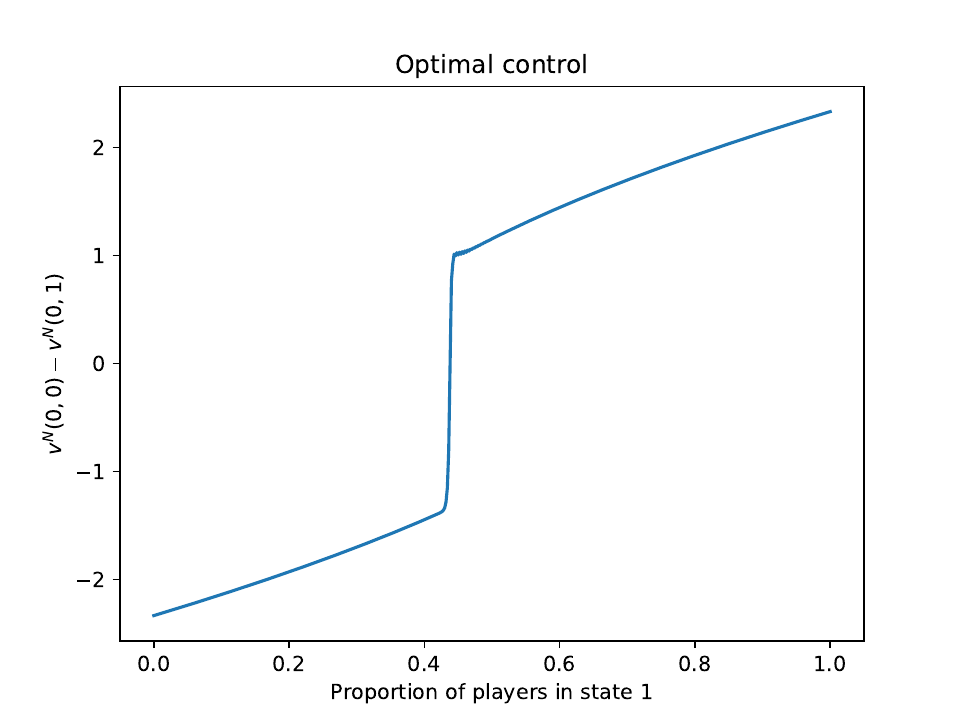}
        \caption{Directly solving~\eqref{eq:N-NLL}}
        \label{fig:two-state-NLL}
    \end{subfigure}
    \hfill
    \begin{subfigure}[t]{0.48\textwidth}
        \centering
        \includegraphics[width=\linewidth]{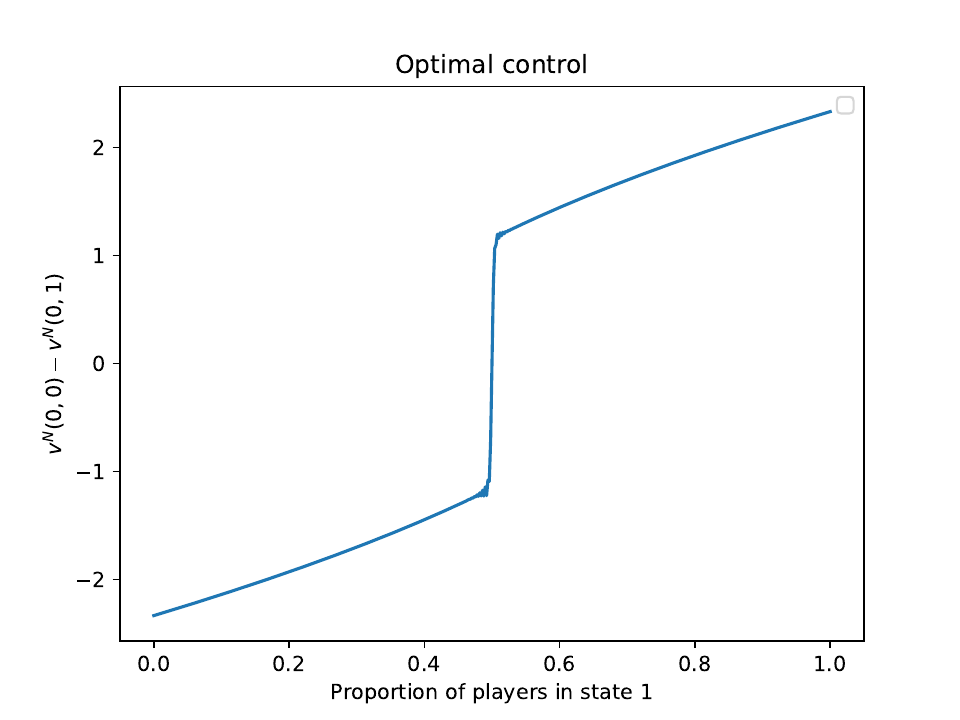}
        \caption{$12$ Picard iterations based on HJB equation}
        \label{fig:two-state-picard}
    \end{subfigure}
    \caption{Second model: plot of $p\mapsto v^N(t,0,p)-v^N(t,1,p)$ at initial time $t=0$.}
    \label{fig:empirical-distributions2}
\end{figure}

\subsection{Cyber-security model with adjustment costs}
\label{ss:cyber}

This section presents a variation of the cyber-security model due to Bensoussan and Kolokoltsov~\cite{Kolokoltsov_2016}. In this model, we are given a network of computers, each of which  can be in one of four states:
$$
\cX := \{\mathrm{DI}, \mathrm{DS}, \mathrm{UI}, \mathrm{US}\}.
$$
Here, the first letter refers to the protection status of the computer (defended versus undefended), and the second letter refers to whether the computer has been infected or is susceptible to infection. The model takes as given an attack rate $v_H>0$, infection rates $q_\mathrm{inf}^{\mathrm{D}} , q_\mathrm{inf}^{\mathrm{U}}>0$, recovery rates $q^{\mathrm{D}}_{\mathrm{rec}}, q^{\mathrm{D}}_{\mathrm{rec}}>0$ and parameters $\lambda_{\mathrm{UU}},\lambda_{\mathrm{DU}},\lambda_{\mathrm{DD}},\lambda_{\mathrm{UD}}>0$. A computer can get infected as a result of the attack rate $v_{\mathrm{H}}$, or from other infected computers. The model of Kolokoltsov and Bensoussan utilizes bang-bang optimal controls. Here, we modify slightly the model by considering an unbounded action set and a quadratic running cost which discourages from using large actions. 
The dynamics are determined by the following controlled transition rate matrix: Given a distribution $\mu=(\mu(\mathrm{DI}),\mu(\mathrm{DS}),\mu(\mathrm{UI}),\mu(\mathrm{US}))$ and a control $\alpha=\alpha(x,\mu)$ consider the following rate matrix
$$
\begin{array}{c|cccc}
 & \mathrm{DI} & \mathrm{DS} & \mathrm{UI} & \mathrm{US} \\
\hline
\mathrm{DI} & \cdots & q^{\mathrm{D}}_{\mathrm{rec}} & \alpha_{\mathrm{UI}}(\mathrm{DI},\mu) & 0 \\
\mathrm{DS} & v_{\mathrm{H}} q_\mathrm{inf}^{\mathrm{D}} + \lambda_{\mathrm{DD}} \mu(\mathrm{DI}) + \lambda_{\mathrm{UD}} \mu(\mathrm{UI}) & \cdots & 0 & \alpha_{\mathrm{US}}(\mathrm{DS},\mu) \\
\mathrm{UI} & \alpha_{\mathrm{DI}}(\mathrm{UI},\mu) & 0 & \cdots & q^{\mathrm{U}}_{\mathrm{rec}} \\
\mathrm{US} & 0 & \alpha_{\mathrm{DS}}(\mathrm{US},\mu) & v_{\mathrm{H}} q_\mathrm{inf}^{\mathrm{U}} + \lambda_{\mathrm{UU}} \mu(\mathrm{UI}) + \lambda_{\mathrm{DU}} \mu(\mathrm{DI}) & \cdots
\end{array}
$$
together with the following cost functional
$$
J(0,x,\mu,\alpha;\beta) = \E^{\alpha\otimes\beta}\Bigl[\int_0^T \left(\frac12 |\alpha(t,X_t,\mu_t)|_2^2 + (k_{\mathrm{D}}\chi_{\mathrm{D}}+k_{\mathrm{I}}\chi_{\mathrm{I}})(X_t) \right) \,\d t\Bigr],\quad (x,\mu)\in\sX,\, \alpha,\beta\in\sA.
$$
Here, $k_{\mathrm{D}}>0$ is the fee of having protection, $k_{\mathrm{I}}>0$ is the cost per unit of time resulting from infection and $\mathrm{D}:=\{\mathrm{DI},\mathrm{DS}\}$, $\mathrm{I}:=\{\mathrm{DI},\mathrm{UI}\}$.
The idea is that $\alpha$ determines the rate of changing the protection status of one's computer, from unprotected to protected and vice versa. This is costly and we model this using convex adjustment costs $\frac12\alpha^2$. The dots ``$\cdots$'' in the above transition matrix are the negative of the sum of the remaining row entries.

Since the number of ODEs is of order $(d-1)^N = 3^N$ in this example, instead of relying on an ODE solver, we resort to the deep learning approach outlined in Section~\ref{ssec:neural-network}: We use Monte Carlo simulations with empirical risk minimization for optimization in each iteration. The controls are implemented as fully connected feedforward neural networks with two hidden layers, each with 64 neurons.

The number of computers (players) is $24$, and the time horizon is $T=10$. We select the following parameters, which follow \cite[Section 7.2]{lauriere2021numerical},
\begin{align*}
v_{\mathrm{H}} = 0.2,\   q_{\mathrm{inf}}^{\mathrm{D}} = 0.4,\ q_{\mathrm{inf}}^{\mathrm{U}}=0.3,\   q_{\mathrm{rec}}^{\mathrm{D}}=0.1,\ q_{\mathrm{rec}}^{\mathrm{U}}=0.65,\ \\
\lambda_{\mathrm{UD}} = \lambda_{\mathrm{DD}}=0.4,\    \lambda_{\mathrm{DU}} = \lambda_{\mathrm{UU}}=0.3,\ k_{\mathrm{D}}=0.3,\  k_{\mathrm{I}}=0.5.   
\end{align*}

We start with two different initial distributions: (a) a uniform distribution over all states, see Figure \ref{fig:cyber-i}, and (b) with only defended and infected computers, see Figure \ref{fig:cyber-ii}. After training the neural network using Algorithm~\ref{algo:nn-picard}, we evaluate the code $10$ times and plot the empirical average for each of the four states with one standard deviation over 
these runs in Figure~\ref{fig:cyber}.

\begin{figure}[h]
    \centering
    \begin{subfigure}[t]{0.48\textwidth}
        \centering
        \includegraphics[width=\linewidth]{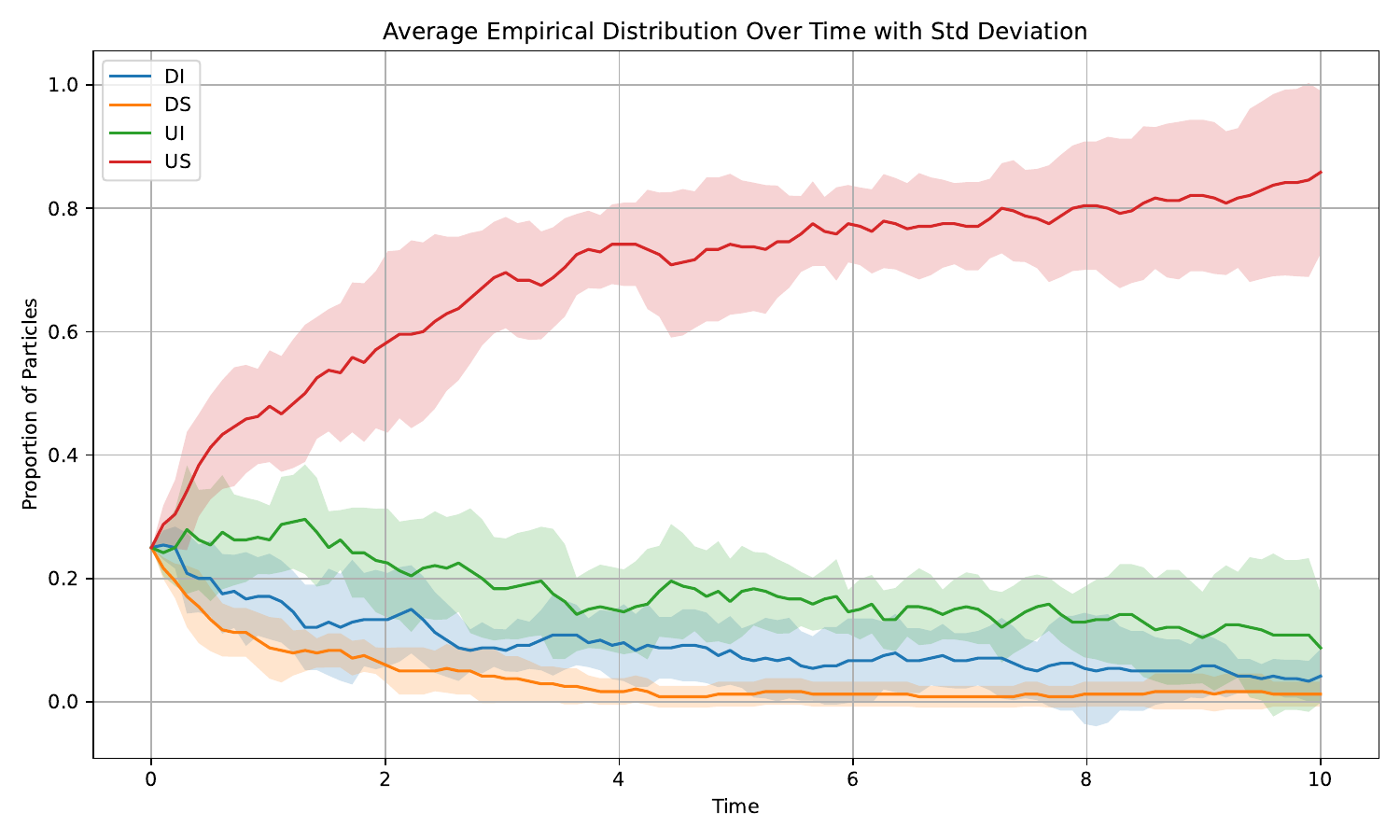}
        \caption{Starting from the uniform distribution}
        \label{fig:cyber-i}
    \end{subfigure}
    \hfill
    \begin{subfigure}[t]{0.48\textwidth}
        \centering
        \includegraphics[width=\linewidth]{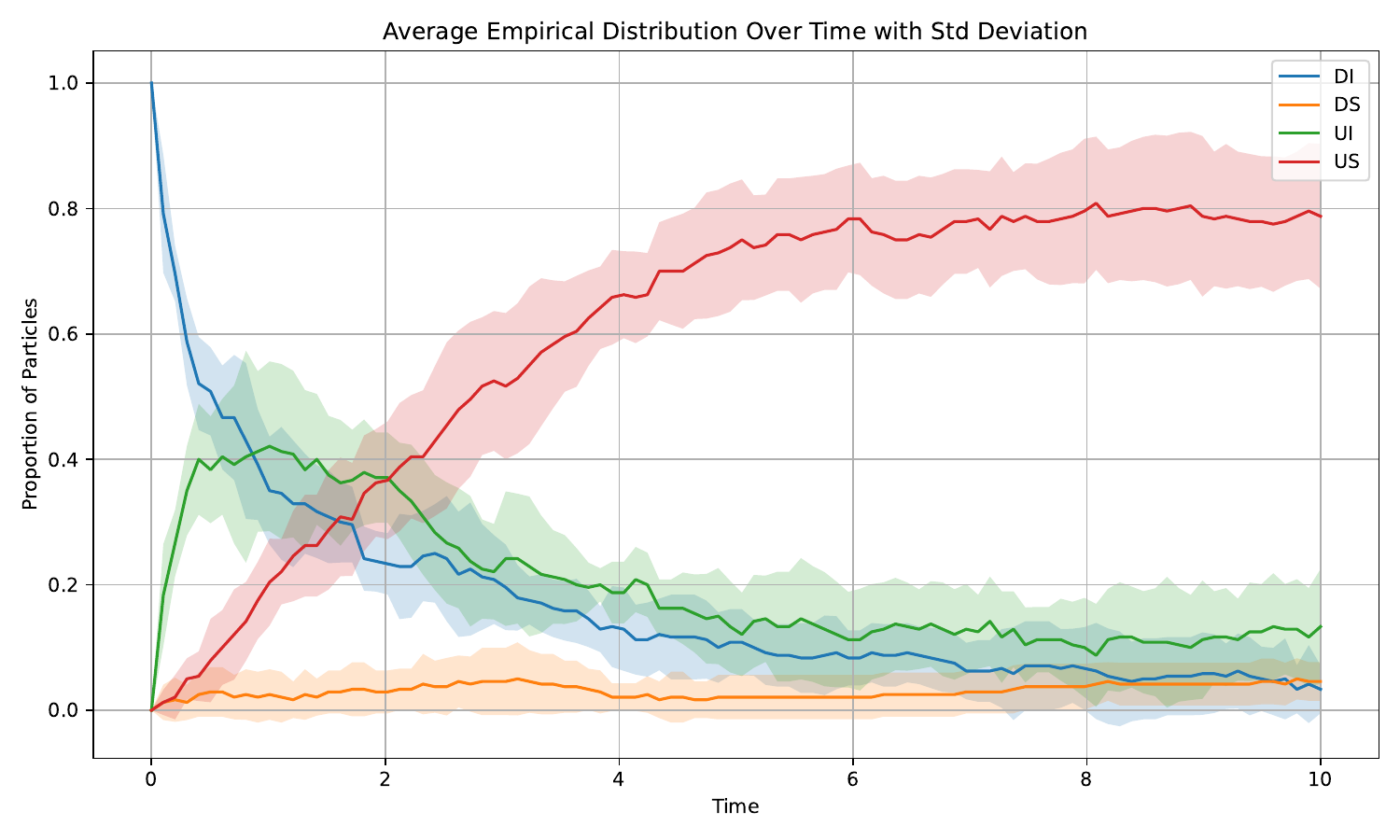}
        \caption{Starting from only defended and infected computers}
        \label{fig:cyber-ii}
    \end{subfigure}
    \caption{Time evolution of the empirical distributions}
    \label{fig:cyber}
\end{figure}

 We observe that our results are qualitatively consistent with the numerical results obtained in the mean-field model by \cite[Section 7.2]{lauriere2021numerical}, specifically, see their Figures 20 and 21. In particular, in both cases, the long time behavior is similar, with roughly $80\% $ of computers being not defended and susceptible to infection.

\bibliographystyle{abbrvnat}
\bibliography{ref}
\end{document}